\newcommand{\Vf}{\mathfrak{C}\kern-.05em\mathfrak{R}}
\DeclareMathOperator{\B}{\mathscr{B}\kern-.1em}
\DeclareMathOperator{\Jac}{Jac}
\DeclareMathOperator{\im}{im}
\newcommand{\Bchi}{\B\kern-.1em\chi}
\newcommand{\BchiR}{\B\kern-.1em\zeta_R}
\newcommand{\CC}{\mathbb C}
\newcommand{\Ccal}{{\mathcal{C}}}
\newcommand{\Dcal}{{\mathcal{D}}}
\newcommand{\Fcall}{\mathcal{F}}
\newcommand{\NN}{\mathbb N}
\newcommand{\OO}{\Ocal}
\newcommand{\Ocal}{\mathscr O}
\newcommand{\bD}{{\bar{D}}}
\newcommand{\cjcl}[1]{[ \kern-.15em [ #1 ]\kern-.15em ]}
\newcommand{\crst}{\mathscr{C \kern-.25em R}} 
\newcommand{\git}[1]{{/\!\!/_{\kern-.2em #1}}\,}
\DeclareMathOperator{\A}{\mathcal{A}}
\DeclareMathOperator{\bpat}{{\bar{\partial}}}
\newcommand{\fb}{{\bar{f}}}
\DeclareMathOperator{\Dom}{{Dom}}
\DeclareMathOperator{\Hcall}{\mathcal{H}}
\newcommand{\ib}{{\bar{i}}}
\newcommand{\jb}{{\bar{j}}}
\newcommand{\pat}{{\partial}}
\newtheorem{thm}{Theorem}[section] 
\newtheorem{pro}[thm]{Proposition} 
\newtheorem{lem}[thm]{Lemma} 
\newtheorem{cor}[thm]{Corollary} 
\theoremstyle{definition} 
\newtheorem{defn}[thm]{Definition} 
\theoremstyle{remark} 
\newtheorem{rem}[thm]{Remark}
\newtheorem{notn}[thm]{Notation}
 \title[$\bpat_f$-Neumann problem]{A Twisted $\bpat_f$-Neumann Problem and Toeplitz $n$-tuples from singularity theory}
 \author{Hao Wen}
 \address{School of Mathematical Sciences, Peking University, Beijing, China}
 \email{jiange133@pku.edu.cn}
 \author{Huijun Fan$^\dag$}
\thanks{$\dag$ Supported by NSFC(11271028), NSFC(11325101), and Doctoral Fund of Ministry of Education of China(20120001110060)}
\address{School of Mathematical Sciences, Peking University, Beijing, China}
\email{fanhj@math.pku.edu.cn} 
\begin{document}
 \maketitle
\numberwithin{equation}{section}
 \begin{abstract}
     A twisted $\bpat_f$-Neumann problem associated to a singularity $(\OO_n, f)$ is established. By constructing the connection to the Koszul complex for toeplitz $n$-tuples $(f_1,\cdots,f_n)$ on Bergman spaces $B^0(D)$, we can solve this $\bpat_f$-Neumann problem. Moreover, the cohomology of the $L^2$ holomorphic Koszul complex $(B^*(D),\pat f\wedge )$ can be computed explicitly. 
 \end{abstract}

 \section{Introduction}
 Let $D$ be a bounded pseudoconvex domain in $\mathbb{C}^n$ with $C^\infty$ smooth boundary $\partial D$ and $f$ a holomorphic function on $\bar{D}$ with only isolated critical points in $D$ and no critical points on $\partial D$. Under such assumption, we get two objects in the framework of analysis.

 The first object is the toeplitz $n$-tuples with symbols $(f_1,f_2,\cdots,f_n)$ defined on the Bergman space on $D$, where the $f_i$'s are partial derivatives of $f$. One can study the $L^2$ holomorphic complex $(B^*(D),\pat f\wedge)$ given by
$$
 0\rightarrow B^0(D) \xlongrightarrow{\partial f \wedge} B^1(D) \xlongrightarrow{\partial f \wedge} \cdots \xlongrightarrow{\partial f \wedge} B^n(D) \rightarrow 0.
$$
Note that if without $L^2$ condition, this complex is an algebraic Koszul complex. If assuming $(f_1,\cdots,f_n)$ is regular, then the homology of the algebraic Koszul complex will only be nontrivial on the top term and is isomorphic to the Jacobian ring of $f$ on $D$. In the assumption of $L^2$ integrability, lack of noetherian ring structure make things complicated. This complex is an important example in Taylor's multivariable spectral theory(ref.\cite{Ta}) and has been studied a lot. The spectral picture, spectral mapping theorem and the index theory were all developed(ref.\cite{EP}).  The index of this complex is computed to be the dimension of $\Jac(f)$ on $D$(ref.\cite{EP}, Chapter 10). The fact that the cohomology is concentrated at the $n^{th}$ degree should be known (we were informed by M. Putinar \cite{Pu} that this can be proved via the spectral localization technique), but the direct proof seems not so easy. In this paper, we will reprove this result via the study of $\bpat_f$ operator.

On the other hand, we can define the twisted Cauchy-Riemann operator $\bar{\partial}_f := \bar{\partial} + \partial f\wedge$ on $D$, which only preserving the real grading of the differential forms, not the Hodge grading. This operator was used by physicists to study the topological field theory of Landau-Ginzburg model from the B side(ref. \cite{Ce,CV}). In recent years, LG model has been found to be a very important part of 2-d topological field theory, mirror symmetry and categorification theory of open strings(ref. \cite{FJR,CR,FJ},\cite{GMW},\cite{KKS}). Inspired by the physicists' work, the second author  proposed an approach (\cite{Fa}) to study the singularity theory of $f$ by constructing the Hodge theory for the operator $\bpat_f$ and the twisted Laplacian $\Delta_f=\bpat_f\bpat_f^*+\bpat_f^*\bpat_f$. The aim is to construct the Saito's Frobenius manifold structure (ref. \cite{ST}) for singularities and eventually treat the quantization problem of LG model from the B side. Recently, a different method via the theory of polyvector fields was built by Li-Li-Saito \cite{LLS} for studying the singularity and the related primitive forms, which however did not touch the Hodge structure. The paper \cite{Fa} can only treat the marginal deformation of a general singularity, but not  the universal deformation of a singularity. Hence to recover Saito's Frobenius manifold structure from the analytical method, we must study some boundary value problem of $\bpat_f$ operator.

In this paper, we will study the $\bpat_f$-Neumann problem on $D$. This problem is related to the $L^2$ complex $(L^2(D),\bpat_f)$, whose cohomology group is denoted by $H^*_{((2),\bpat_f)}$. As the first result, we can solve the $\bpat_f$-Neumann problem by proving the strong Hodge decomposition theorem as below.

\begin{thm}\label{sec1:theo-1}
Let $D$ be a bounded strongly pseudoconvex domain in $\mathbb{C}^n$ with $C^\infty$ smooth boundary $\partial D$ and $f$ a holomorphic function on $\bar{D}$ with only isolated critical points in $D$ and no critical points on $\partial D$. Then we have the decomposition
\begin{equation}
H^*(D)=\Hcall^*\oplus \im\bpat_f\oplus \im\bpat_f^*.
\end{equation}
and then the isomorphism
\begin{equation}
H^*_{((2),\bpat_f)}\cong \Hcall^*.
\end{equation}
Furthermore, all the spaces $\Hcall^*$ are of finite dimensional.
 \end{thm}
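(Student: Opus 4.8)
The plan is to regard $(L^2(D),\bpat_f)$ as a Hilbert complex graded by the \emph{total} degree $k=p+q$ (since $\bpat_f=\bpat+\pat f\wedge$ raises $p+q$ by one while mixing the Hodge bidegree) and to deduce all three assertions from a single semi-Fredholm a priori estimate. First I would fix the functional analysis: take $\bpat_f$ to be the closed, densely defined realization of $\bpat+\pat f\wedge$ on $L^2$ forms, let $\bpat_f^{*}=\bpat^{*}+\iota_{\overline{\pat f}}$ be its Hilbert-space adjoint, and form the self-adjoint Laplacian $\Delta_f=\bpat_f\bpat_f^{*}+\bpat_f^{*}\bpat_f$ with form domain $\Dom(\bpat_f)\cap\Dom(\bpat_f^{*})$ and the free ($\bpat_f$-Neumann) boundary condition $u\in\Dom(\bpat_f^{*})$. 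Because $\pat f\wedge$ is a bounded zeroth-order operator, $\Dom(\bpat_f)=\Dom(\bpat)$ and $\Dom(\bpat_f^{*})=\Dom(\bpat^{*})$, so the boundary conditions coincide with the ordinary $\bpat$-Neumann ones, which is what will let me import the strongly-pseudoconvex boundary analysis unchanged. That $\bpat_f^{2}=0$ is the algebraic check: $\bpat^{2}=0$, $(\pat f\wedge)^{2}=0$ as $\pat f$ is a $1$-form, and the anticommutator $\bpat(\pat f\wedge)+(\pat f\wedge)\bpat$ vanishes since $\bpat(\pat f)=0$ as $f$ is holomorphic. Thus $\Hcall^{*}:=\ker\Delta_f=\ker\bpat_f\cap\ker\bpat_f^{*}$ is a genuine harmonic space.

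The weak form of the decomposition is then automatic from the theory of Hilbert complexes: one always has $\ker\bpat_f=\Hcall^{*}\oplus\overline{\im\bpat_f}$ and the orthogonal splitting $H^{*}(D)=\Hcall^{*}\oplus\overline{\im\bpat_f}\oplus\overline{\im\bpat_f^{*}}$, whence the \emph{reduced} cohomology is always isomorphic to $\Hcall^{*}$. To upgrade this to the stated strong decomposition (with $\im\bpat_f$ and $\im\bpat_f^{*}$ already closed), to obtain $\dim\Hcall^{*}<\infty$, and to identify $H^{*}_{((2),\bpat_f)}=\ker\bpat_f/\im\bpat_f$ with $\Hcall^{*}$, it suffices to prove a single estimate of semi-Fredholm type,
\[
\|u\|^{2}\le C\big(\|\bpat_f u\|^{2}+\|\bpat_f^{*}u\|^{2}+\|Ku\|^{2}\big),\qquad u\in\Dom(\bpat_f)\cap\Dom(\bpat_f^{*}),
\]
with $K$ a compact operator on $L^{2}$. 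Indeed, restricting to $\Hcall^{*}$ shows its unit ball is precompact, so $\dim\Hcall^{*}<\infty$; and the standard Peetre-type argument upgrades this to closed range for $\bpat_f$ and $\bpat_f^{*}$.

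The heart of the matter is producing this estimate, via the Morrey--Kohn--Hörmander integration by parts adapted to $\bpat_f$. Expanding $Q_f(u,u)=\|\bpat_f u\|^{2}+\|\bpat_f^{*}u\|^{2}$ yields three kinds of terms: the ordinary $\bpat$-Neumann form $\|\bpat u\|^{2}+\|\bpat^{*}u\|^{2}$, whose boundary contribution is bounded below by the Levi form; the positive zeroth-order term $\||\pat f|\,u\|^{2}$ coming from the Kähler identity $\{\pat f\wedge,\iota_{\overline{\pat f}}\}=|\pat f|^{2}$; and bounded cross/Hessian terms built from the second derivatives of $f$. Two coercive mechanisms then cooperate. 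On the positive-degree ($q\ge 1$) directions and near $\partial D$, strong pseudoconvexity supplies the subelliptic $1/2$-estimate of the $\bpat$-Neumann problem; the hypothesis that $f$ has \emph{no} critical points on $\partial D$ is exactly what keeps $|\pat f|$ bounded below near the boundary, so no uncontrolled boundary term survives and the twist is nondegenerate there. In the interior the function ($q=0$, Bergman) directions — on which the untwisted $\bpat$-Neumann Laplacian has the infinite-dimensional kernel $B^{0}(D)$ — are instead controlled by $\||\pat f|\,u\|^{2}$, which is coercive on the region where $|\pat f|\ge\delta$.

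The one place where neither mechanism is coercive is a neighbourhood of an isolated critical point, where $|\pat f|$ degenerates; making the contribution there into a genuinely \emph{compact} error $\|Ku\|^{2}$ is the main obstacle. This is where I would invoke the bridge advertised in the abstract, passing to the $L^{2}$-holomorphic Koszul complex $(B^{*}(D),\pat f\wedge)$ for the Toeplitz $n$-tuple $(f_1,\dots,f_n)$: a $\Delta_f$-harmonic form must be $\bpat$-harmonic and, modulo $\bpat$-exact pieces, annihilated by $\pat f\wedge$, so its components represent Koszul-homology classes on the Bergman space, and isolatedness of the singularity — equivalently finiteness of $\Jac(f)$ — forces these into a finite-dimensional, hence compactly obstructed, space. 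Granting the estimate, the conclusion is formal: the semi-Fredholm inequality gives $\dim\Hcall^{*}<\infty$ and closed range for $\bpat_f$ and $\bpat_f^{*}$, turning the weak splitting into the strong decomposition $H^{*}(D)=\Hcall^{*}\oplus\im\bpat_f\oplus\im\bpat_f^{*}$, while closed range makes reduced and unreduced cohomology agree, giving $H^{*}_{((2),\bpat_f)}\cong\Hcall^{*}$. The delicate analytic work is thus concentrated entirely in the critical-point model: reconciling the degeneration of the zeroth-order positivity with the rigidity of holomorphic functions so that only finitely many Bergman directions can be (nearly) harmonic.
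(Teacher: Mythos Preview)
Your framework is sound through the weak decomposition, and you correctly isolate the two coercive mechanisms (the Levi form for $q\ge 1$, the potential $|\pat f|^{2}$ away from $\Cr(f)$) together with the genuine obstruction near the isolated critical points. The gap is that you never actually produce the compact operator $K$. Multiplication by a cutoff $\chi$ supported near $\Cr(f)$ is not compact on $L^{2}(D)$, and interior ellipticity only trades $\|\chi u\|$ for another $\|u\|$-term, so the loop does not close. Your fallback---``invoke the bridge to the Koszul complex''---is the right ingredient but mis-assembled: the claim that a $\Delta_{f}$-harmonic form ``must be $\bpat$-harmonic'' is false ($\bpat_{f}u=0$ only gives $\bpat u=-\pat f\wedge u$), and appealing to finiteness of $\Jac(f)$ to declare the obstruction ``compactly obstructed'' is circular, since the finite-dimensionality of the relevant $L^{2}$ object is precisely what the estimate is meant to prove. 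Finiteness of the \emph{algebraic} Jacobian ring is not the same as finiteness of the $L^{2}$ Koszul cohomology on $B^{*}(D)$; the latter is an analytic statement about essential Toeplitz spectra.

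The paper in fact runs the logic in the opposite order and never establishes a semi-Fredholm inequality at all (it explicitly notes the direct estimate is obstructed because $\Delta_{f}$ mixes $(k,0)$-forms with the other types, and even exhibits that the Green operator on functions fails to be compact). Instead it first constructs, using only H\"ormander's $L^{2}$ solvability for $\bpat$ with norm control, a quasi-isomorphism $(L^{2}(D),\bpat_{f})\simeq (B^{*}(D),\pat f\wedge)$. It then invokes Taylor's joint spectral theory for the Toeplitz $n$-tuple: the hypothesis ``no critical points on $\partial D$'' gives $0\notin\sigma_{e}(f_{1},\dots,f_{n};B^{0}(D))$, whence the $L^{2}$ Koszul cohomology---and hence $H^{*}_{((2),\bpat_{f})}$---is finite-dimensional. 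Closed range then drops out of an abstract fact (a closed operator between Banach spaces with finite-codimensional range has closed range), and Banach's closed-range theorem transfers it to $\bpat_{f}^{*}$; the spectral gap is deduced \emph{a posteriori}. So the Koszul bridge replaces the a priori estimate rather than feeding into one, and that reordering is the actual content of the proof.
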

Theorem \ref{sec1:theo-1} is a direct conclusion of Theorem \ref{sec3:theo-4} and Corollary \ref{sec3:coro-1}. To prove this theorem, we first show that $\Delta_f$ with $\bpat_f$-Neumann boundary condition is a self-adjoint operator, hence there exists a weak Hodge decomposition. Usually, to prove the strong decomposition, we need a global a priori estimate for the Green operator which can naturally deduce the compactness by Rellich theorem. However, things are different in our $\bar\partial_f$-Neumann problem. The $\Delta_f$ operator always mix $(k,0)$-forms with other types of forms, thus the a priori estimate becomes complicated because there is no global estimate to control the Sobolev norms of holomorphic $k$ form in $\bpat$-Neumann problem. However, we have an indirect way to get around this problem. We can construct an isomorphism between the $L^2$ complex $(L^2(D),\bpat_f)$ and the $L^2$ holomorphic complex $(B^*(D), \pat f\wedge)$. By Taylor's joint spectral theory, the cohomology of the later complex can be proved to be of finite dimension. Using the finite dimensionality and a theorem from functional analysis, we can prove the range of $\bpat_f $ and $\bpat_f^*$ are all closed. So this proves the strong Hodge decomposition, meanwhile we can prove that the spectrum of $\Delta_f$ has a gap at $0$.

Conversely, by studying the complex $(L^2(D),\bpat_f)$ in $C^\infty$ category, we can calculate the cohomology of $(B^*(D),\pat f\wedge )$ as mentioned above. The second main result is as follows.

\begin{thm}\label{sec1:theo-2}
 If $D$ is a bounded strongly pseudoconvex domain in $\mathbb{C}^n$ with $C^\infty$ smooth boundary $\partial D$ and $f$ is a holomorphic function on $\bar{D}$ with isolated critical points in $D$ and no critical points on $\partial D$, then the dimension of the Koszul cohomology on Bergman spaces is concentrated at the $n^{th}$ degree and equal to the number of critical points, with multiplicities accounted, of $f$ in $D$.
 \end{thm}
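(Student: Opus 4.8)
The plan is to transport the computation to the twisted complex $(L^2(D),\bpat_f)$, where the harmonic theory of Theorem~\ref{sec1:theo-1} is available, and then to combine two inputs: the Euler characteristic of the complex, and a concentration statement placing all cohomology in the top degree. Concretely, I would organize the $L^2$ forms into the double complex $\Omega^{p,q}_{(2)}(D)$ with the two anticommuting differentials $\bpat\colon\Omega^{p,q}\to\Omega^{p,q+1}$ and $\pat f\wedge\colon\Omega^{p,q}\to\Omega^{p+1,q}$, whose total differential is $\bpat_f$; here $\bpat_f^2=0$ follows from $\bpat(\pat f)=0$, which holds because $f$ is holomorphic so the coefficients $\pat_i f$ of $\pat f$ are holomorphic. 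Filtering by the holomorphic degree $p$ and taking vertical cohomology first, Hörmander's $L^2$ existence theorem on the pseudoconvex domain $D$ kills the higher Dolbeault groups, so the first page collapses to the row $q=0$ with entries $B^p(D)$ and the next differential is exactly $\pat f\wedge$. This is the isomorphism underlying Theorem~\ref{sec1:theo-1}, and it yields $H^k_{((2),\bpat_f)}\cong H^k(B^*(D),\pat f\wedge)$, so that the total degree $k$ of the twisted cohomology matches the Koszul degree. Thus it suffices to prove that $H^p(B^*(D),\pat f\wedge)$ vanishes for $p<n$ and to identify $\dim H^n$.

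For the numerical part I would invoke the index computation of Eschmeier--Putinar \cite{EP}: the Koszul complex $(B^*(D),\pat f\wedge)$ is Fredholm and its $L^2$ index agrees with the algebraic one, so the Euler characteristic $\sum_p(-1)^p\dim H^p(B^*(D),\pat f\wedge)$ equals $(-1)^n\dim\Jac(f)$, the number of critical points of $f$ in $D$ counted with multiplicity. Once concentration in degree $n$ is established, this alternating sum reduces to $(-1)^n\dim H^n$, giving $\dim H^n=\dim\Jac(f)$ and finishing the theorem.

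The heart of the matter is the concentration $H^p=0$ for $p<n$, and this is where I would work in the $C^\infty$ category using the harmonic representatives supplied by Theorem~\ref{sec1:theo-1}, which are smooth up to the boundary by the subelliptic regularity of the strongly pseudoconvex $\bpat_f$-Neumann problem. The bottom degree is immediate: a class in $H^0$ is a holomorphic $g\in B^0(D)$ with $g\,\pat_i f\equiv 0$ for all $i$, forcing $g\equiv 0$ since the $\pat_i f$ vanish only on the isolated set $\crit(f)$. For the remaining low degrees I would exploit the Clifford-type identity $(\pat f\wedge)(\pat f\wedge)^*+(\pat f\wedge)^*(\pat f\wedge)=|\pat f|^2\cdot\id$, which shows that the fibrewise Koszul complex of the nonvanishing $(1,0)$-form $\pat f$ is exact in every degree on $D\setminus\crit(f)$; expanding $\Delta_f=\Delta_{\bpat}+|\pat f|^2+(\text{Hessian terms})$ in Bochner--Kodaira form then exhibits the twisted Laplacian as a Witten-type Laplacian whose potential $|\pat f|^2$ localizes all low-energy cohomology at the critical points, while the Clifford action of the Hessian term pins the ground states to the top holomorphic degree, in agreement with the local model in which the Koszul cohomology of an isolated singularity is concentrated in degree $n$ and equals the local Jacobian ring.

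The main obstacle, as already flagged in the discussion of Theorem~\ref{sec1:theo-1}, is that $\Delta_f$ mixes $(p,0)$-forms with forms of other Hodge type and there is no global a priori Sobolev estimate controlling holomorphic $p$-forms, so the localization cannot be read off from a single energy inequality. I would handle this by a zig-zag descent in the $L^2$ double complex: given a Koszul cocycle $\omega\in B^p(D)$ with $p<n$, i.e.\ a holomorphic $p$-form with $\pat f\wedge\omega=0$, I first solve $\pat f\wedge\eta_0=\omega$ smoothly, using the fibrewise Clifford inverse away from $\crit(f)$ patched with local holomorphic primitives near each critical point, the latter available because isolated singularities make $(\pat_1 f,\dots,\pat_n f)$ a regular sequence, so the local Koszul complex is acyclic in degrees $<n$. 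Then $\bpat\eta_0$ satisfies $\pat f\wedge\bpat\eta_0=-\bpat(\pat f\wedge\eta_0)=-\bpat\omega=0$, and I correct $\eta_0$ to a holomorphic primitive by successively solving $\bpat$-equations via Hörmander's weighted estimates, using the closed-range and finite-dimensionality conclusions of Theorem~\ref{sec1:theo-1} to guarantee the required $L^2$ bounds at each stage. Arranging the descent so that the corrections do not destroy the relation $\pat f\wedge\eta=\omega$ is the technical crux; granting it, $H^p(B^*(D),\pat f\wedge)=0$ for $p<n$, and the theorem follows by combining this concentration with the index identity above.
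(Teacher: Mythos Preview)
Your overall architecture is sound---the isomorphism $H^k_{((2),\bpat_f)}\cong H^k(B^*(D),\pat f\wedge)$ via the double complex is exactly Theorem~\ref{sec3:theo-1}, and your Clifford contraction $V_f=\sum_i \bar f_i|\nabla f|^{-2}(dz_i\wedge)^*$ is the same operator the paper uses. The gap is in the concentration step, and it is not merely technical: your zig-zag is circular as written. After solving $\pat f\wedge\eta_0=\omega$ smoothly and then solving $\bpat\alpha_1=\bpat\eta_0$ by H\"ormander, the holomorphic form $\eta_0-\alpha_1$ satisfies $\pat f\wedge(\eta_0-\alpha_1)=\omega-\pat f\wedge\alpha_1$, and $\pat f\wedge\alpha_1$ is again a $\pat f$-closed element of $B^p(D)$, i.e.\ another class in the very group $H^p(B^*(D),\pat f\wedge)$ you are trying to kill. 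There is no evident norm or filtration that decreases, so ``arranging the descent so that the corrections do not destroy the relation $\pat f\wedge\eta=\omega$'' is not a matter of bookkeeping---it is the whole problem. Your Witten-Laplacian paragraph is heuristic and does not supply the missing inequality; as you yourself noted after Theorem~\ref{sec1:theo-1}, there is no global Sobolev control on holomorphic $(p,0)$-forms, which is precisely what obstructs a direct energy argument.

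The paper avoids this circularity by leaving the $L^2$ category altogether for the concentration step. The chain is: global regularity of the Green operator $G$ on a strongly pseudoconvex domain (Proposition~\ref{sec5:pro-1}) gives $\Hcall^*\subset\A^*(\bar D)$ and hence $H^*_{((2),\bpat_f)}\cong H^*(\A^*(\bar D),\bpat_f)$ (Proposition~\ref{sec4:prop-1}); then an explicit chain homotopy $R_\rho=(1-\rho)V_f\,(1+[\bpat,V_f])^{-1}$, built from your Clifford inverse but cut off away from $\crit(f)$, satisfies $[\bpat_f,R_\rho]=1-T_\rho$ with $T_\rho$ landing in compactly supported forms (Lemma~\ref{sec4:lemm-2}), so $H^*(\A^*(\bar D),\bpat_f)\cong H^*_{\bpat_f}(D)$ (Propositions~\ref{sec4:prop-3}, \ref{sec4:prop-4}). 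Finally, on the open domain $D$ with no $L^2$ constraints the spectral sequence you describe runs cleanly: $E_1$ collapses to the holomorphic Koszul complex $(\Omega^*(D),\pat f\wedge)$, and since $(f_1,\dots,f_n)$ is a regular sequence in $\Ocal(D)$ the $E_2$-page is $\Jac(f)$ concentrated at bidegree $(n,0)$ (Proposition~\ref{sec4:prop-2}). This gives concentration and the dimension simultaneously, so the Eschmeier--Putinar index formula you invoke is not needed. The point to absorb is that $R_\rho$ packages your entire zig-zag into a single $\bpat_f$-homotopy, and by acting only where $(1-\rho)\neq 0$ it never has to solve a Koszul equation near the critical points.
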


\begin{cor} Under the assumption of Theorem \ref{sec1:theo-2}, The cohomology groups
$$
H^*_{\bpat_f}(D),H^*_{\bpat_f}(\bD),H^*_{(c,\bpat_f)},H^*_{((2),\bpat_f)}, H^*_{\bpat_f}(\Ccal),H^*_{(0,\bpat_f)}
$$
are all isomorphic to the space $\Hcall^*$.
\end{cor}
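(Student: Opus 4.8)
The unifying observation is that all six cohomologies are carried by an arbitrarily small neighborhood of the finite critical set $\crit(f)\subset D$, so that the global function space is irrelevant. This rests on two facts. Since $f$ has no critical points on $\partial D$, the covector $\pat f$ is nonvanishing near $\partial D$, and there a twisted Poincar\'e lemma holds: after straightening $f$ to a coordinate the differential becomes $\bpat+dz_1\wedge$, whose $dz_1\wedge$ part is the Koszul differential of a unit and is therefore locally exact, so $\bpat_f$ is locally solvable in a full collar of $\partial D$. Secondly, the twisted Laplacian $\Delta_f=\bpat_f\bpat_f^*+\bpat_f^*\bpat_f$ shares its principal symbol with the ordinary complex Laplacian, because $\pat f\wedge$ and its adjoint have order zero; hence on the strongly pseudoconvex $D$ the $\bpat_f$-Neumann problem inherits Kohn's subelliptic estimates, every harmonic form in $\Hcall^*$ is smooth up to $\bar D$, and the Neumann operator $N$ together with the canonical solution operator $\bpat_f^*N$ preserves $C^\infty(\bar D)$.

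The plan is to compare everything with $H^*_{((2),\bpat_f)}\cong\Hcall^*$ (Theorem \ref{sec1:theo-1}). The collar solvability gives the key reduction: given a $\bpat_f$-closed representative $\alpha$ (smooth, $L^2$, or a current), solve $\alpha=\bpat_f\gamma$ on a collar $U$ of $\partial D$ with a primitive $\gamma$ of the same regularity, pick a cutoff $\chi$ equal to $1$ near $\partial D$ and supported in $U$, and replace $\alpha$ by $\alpha-\bpat_f(\chi\gamma)$, which vanishes near $\partial D$ and is therefore compactly supported in $D$. Thus the compactly supported complex surjects onto each of $H^*_{\bpat_f}(D)$, $H^*_{\bpat_f}(\bD)$, $H^*_{((2),\bpat_f)}$, and $H^*_{\bpat_f}(\Ccal)$. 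On the other side, the subelliptic regularity shows that these groups sit isomorphically inside $\Hcall^*$: a smooth $L^2$-exact form has the smooth primitive $\bpat_f^*N\alpha$, so $H^*_{\bpat_f}(\bD)\cong\Hcall^*$, while interior hypoellipticity makes the smooth forms and the currents two fine resolutions of the same complex of $\bpat_f$-cohomology sheaves, supported on $\crit(f)$, whence $H^*_{\bpat_f}(D)\cong H^*_{\bpat_f}(\Ccal)$ by the abstract de Rham theorem.

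To upgrade the surjections to isomorphisms I would pin down the dimension of the compactly supported and minimal-extension groups, which is the step I expect to be the real obstacle, since these carry no $\bpat_f$-Neumann boundary data and the collar argument alone bounds them only from below. Here I would run the $L^2$ Serre-type duality in which $\int_D\langle\,\cdot\,,\,\cdot\,\rangle$ pairs the compactly supported cohomology with $H^*_{\bpat_f}(\bD)$ in complementary degree; the pairing is nondegenerate precisely because Theorem \ref{sec1:theo-2} furnishes finite dimension and concentration in a single degree, the very properties whose failure invalidates the analogous comparison for the untwisted $\bpat$-complex. Equal finite dimension on both sides then turns the collar surjections into isomorphisms, giving $H^*_{(c,\bpat_f)}\cong H^*_{(0,\bpat_f)}\cong\Hcall^*$ and completing the chain. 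The points demanding the most care are the regularity-preservation of the collar primitive $\gamma$ in the $L^2$ and current categories and the nondegeneracy of the duality pairing, both of which lean once more on the absence of critical points on $\partial D$.
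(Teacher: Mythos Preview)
Your overall plan---reduce everything to compact support via exactness of $\bpat_f$ near $\partial D$, then use regularity and a duality pairing---is in the spirit of the paper, but two of the steps you rely on are not correct as stated.

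The first gap is the regularity claim. You say $\Delta_f$ inherits Kohn's subelliptic estimate because it shares principal symbol with $\Delta_{\bpat}$. But the $\bpat$-Neumann problem has \emph{no} estimate on $(p,0)$-forms (the Bergman space is infinite-dimensional and sits in $\ker\Delta_{\bpat}$), and $\Delta_f$ acts on total degree, coupling the $(k,0)$ component to $(k-1,1)$ through the zeroth-order term $L_f$. Same principal symbol is not enough here. The paper confronts exactly this obstacle in Section~4: one splits $\varphi=\varphi^{p,0}+\varphi'$, applies Kohn's estimate to $\varphi'$, and controls $\varphi^{p,0}$ separately via the scalar operator $\Delta+|\nabla f|^2$, which is genuinely elliptic up to $\partial D$ because $|\nabla f|^2>0$ there (Lemma~\ref{sec5:lem-1} and Proposition~\ref{sec5:pro-1}). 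Your conclusion that $N$ preserves $\A(\bar D)$ is correct, but the argument you give for it fails.

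The second gap is your treatment of $H^*_{\bpat_f}(\Ccal)$: the sentence invoking fine resolutions and the abstract de~Rham theorem to get $H^*_{\bpat_f}(D)\cong H^*_{\bpat_f}(\Ccal)$ misreads $\Ccal$. It is the subcomplex of $\A(\bar D)$ cut out by the boundary condition $\sigma(\bpat,dr)\varphi|_{\partial D}=0$, not a complex of currents, and is not computed by any sheaf on the open $D$. The paper identifies $H^*_{\bpat_f}(\Ccal)\cong H^*_{(0,\bpat_f)}$ by a direct algebraic manipulation (Proposition~\ref{sec3:prop-2}) and then links $H^*_{(0,\bpat_f)}$ to $H^*_{(c,\bpat_f)}$ by the same homotopy used elsewhere. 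Note also that in the duality you invoke, the complex on the $\Ccal$ side carries $\bpat_{-f}$, not $\bpat_f$; the sign flip is what makes Stokes' formula close up.

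Finally, in place of your local straightening of $f$ and patching on a collar, the paper writes down a single global homotopy operator: with $V_f=\sum_i(\bar f_i/|\nabla f|^2)\,\iota_{\partial_i}$ one has $[\pat f\wedge,V_f]=1$ away from $\crit(f)$, and $R_\rho=(1-\rho)V_f\bigl(1+[\bpat,V_f]\bigr)^{-1}$ satisfies $[\bpat_f,R_\rho]=1-T_\rho$ with $T_\rho$ landing in compactly supported forms (Lemmas~\ref{sec4:lemm-1}--\ref{sec4:lemm-2}). Since $R_\rho$ is a bounded order-zero operator on $\A(\bar D)$, this one formula yields all of $H^*_{(c,\bpat_f)}\cong H^*_{\bpat_f}(D)\cong H^*_{\bpat_f}(\bar D)\cong H^*_{(0,\bpat_f)}$ directly, without any patching, Serre duality, or dimension count.
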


  \begin{rem}
 For arbitrary $n$-tuples $(f_1,f_2,\cdots,f_n)$ satisfying the condition that they have only finite common zeros and have no common zeros on $\partial D$, the proof in our article can be applied to the operator $\bar{\partial}+(f_1dz_1+ \cdots +f_ndz_n)\wedge$ and all our results still holds. In fact, throughout our article, we will not use the fact that the $f_i$'s are the partial derivatives of a single function.
 \end{rem}

\begin{notn} We use the super-bracket
$$[A,B]:= AB-(-1)^{\deg(A)\deg(B)}BA$$ in this paper.
 \end{notn}

 \section{$\bpat_f$-Neumann Problem}

 Let $h=\sum_i \frac{1}{2} dz^{i} \otimes d z^{\ib}$ be the standard hermitian metric of $\mathbb{C}^{n}$ in the coordinate system $\{z_i,i=1,\cdots,n\}$.  Let $D$ be a bounded strongly pseudoconvex domain in $\mathbb{C}^n$ with smooth boundary and $f$ a holomorphic function on $\bar{D}$.

The study of pseudoconvexity is one of the central topic in the theory of functions of several complex variables. $D$ is called  pseudoconvex if it can be exhausted by a continuous plurisubharmonic function. Every (geometrically) convex domain in $\CC^n$ is pseudoconvex. If the boundary $\pat D$ is $C^2$, then this is equivalent to the Levi pseudoconvexity we will explain below.

Let $r$ be a $C^2$ function defined in a neighborhood of $p\in \partial D$ satisfying $r|_{\partial D}=0$ and $\|dr\|=1$ on $ \partial D$. Then we can define a Levi form $L_p$ along the $n-1$ dimensional subspace $\{\xi\in T_p^\CC\CC^n|\sum_{j=1}^n\frac{\pat r}{\pat z^j}\xi^j=0\}$ by
\begin{equation}
 L_p(\xi,\eta)=\sum_{i,j}\frac{\pat^2 r}{\pat z^i \pat z^\jb}\xi^i\bar\eta^j.
\end{equation}
If the Levi form $L_p$ is semi-positive at all points $p\in \pat D$, then $D$ is said to be Levi pseudoconvex. If $L_p$ is positive at all points of $\pat D$, then $D$ is said to be strongly pseudoconvex. The balls in $\CC^n$ are strongly pseudoconvex.

Denote by $\A^{p}(D)$ (or $\A^{p,q}(D)$) the space of smooth $p$ forms (or $(p,q)$-forms) on $D$ and $\A(D)=\oplus_p \A^p(D)$. Let $\A^p(\bD)$ be the subspace of $\A^p(D)$ whose elements can be extended smoothly to a small neighborhood of $\bD$ and $\A(\bD)=\oplus_p \A^p(\bD)$. $\A_c^p(D)$ is a subspace of $\A^p(\bD)$ whose elements have compact support disjoint from $\pat D$. Similarly, we have the definitions of $\A^{p,q}(\bD)$ and $\A^{p,q}_c(\bD)$.

For any form $\varphi \in \A^{p,q}(\bD)$, we have the expression
$$
\varphi={\sum_{I,J}}^{\prime}\varphi_{I,J}dz^{I}\wedge d\bar z^J,
$$
where ${\sum}^{\prime}$ means summation over strictly increasing multi-indices and $\varphi_{I,J}$'s are antisymmetric for arbitrary $I$ and $J$.

For any $(p,q)$-forms $\varphi={\sum_{I,J}}^{\prime}\varphi_{I,J}dz^{I}\wedge d\bar z^J$ and $\psi={\sum_{I,J}}^{\prime}\psi_{I,J}dz^{I}\wedge d\bar z^J$, we can define the $L^2$ inner product:

 $$
 \langle \varphi,\psi \rangle={\sum_{I,J}}^{\prime}\langle \varphi_{I,J},\psi_{I,J}\rangle={\sum_{I,J}}^{\prime}\int_{D}\varphi_{I,J}\overline{\psi_{I,J}}dV
 $$
 where $dV$ denote the volume element on $D$ defined by $h$.

 Let $\|\centerdot\|$ be the corresponding $L^2$-norm and $L^2_{(p,q)}(D)$ be the $L^2$-completion space of $\A^{p,q}(\bar D)$. Define $L^2_k(D)=\oplus_{p+q=k}L^2_{(p,q)}(D)$ and $L^2(D)=\oplus_kL^2_k(D)$. Furthermore, the Sobolev $s$-norms $\|\centerdot\|_{s}$ and the corresponding Sobolev spaces $W^s_{(p,q)}(D),W^s_k(D), W^s(D)$ can be defined. For example, for non-negative integer $s$, elements of $W^s(D)$ has derivatives in $L^2(D)$ up to $s$ order and $\|\varphi\|_{s}$ is the sum of $L^2$ norms of derivatives of $\varphi$ up to $s$ order. In particular, we have $W^0(D)=L^2(D)$.

 Now any differential operator $T$ defined on $\A(\bD)$ can be extended to a unbounded closed operator in $L^2(D)$ by means of generalized derivatives. Remember that if $T$ is a closed operator defined on $\Dom(T)\subset L^2(D)$ if and only if the following holds: if $\varphi_i \in \A^p(\bD) \cap L^2(D)$ and $ T(\varphi_i) \in L^2(D)$ are function sequences such that $\varphi_i \rightarrow \varphi$ and $T(\varphi_i) \rightarrow \psi \in L^2(D)$, then  $\varphi$ is in ${\Dom}(T)$ and $T(\varphi)= \psi$.

Now the Cauchy-Riemann operator $\bpat$ and the twisted operator operator $\bpat_f=\bpat+\pat f\wedge:\A^k(\bD)\to\A^k(\bD)$ can be extended to closed operators in $L^2(D)$ such that
 \begin{align*}
{\Dom}(\bar{\partial})&=\{\varphi \in L^2(D)|\bar{\partial} \varphi \in L^2(D)\}\\
{\Dom}(\bar{\partial}_f)&=\{\varphi \in L^2(D)|\bar{\partial}_f \varphi \in L^2(D)\}.
\end{align*}

Since $f$ is bounded on $D$, the multiplication operator $\pat f\wedge$ has the domain ${\Dom}(\partial f \wedge)=L^2(D)$ and actually we have
 $$
 {\Dom}(\bar{\partial}_f)= {\Dom}(\bar{\partial}).
$$

 Now we consider the adjoint of  $\bar{\partial}_{f}$ under the $L^2$ norm. By definition the Hilbert space adjoint $\bar{\partial}_{f}^{*}$ of $\bar{\partial}_{f}$ is defined on the domain  ${\Dom}(\bar{\partial}_{f}^{*})$ consisting of all $\varphi \in L^2_k(D)$ such that
 $|\langle \varphi, \bar{\partial}_{f}(\psi) \rangle| \leq c||\psi||$ for some positive constant $c$ and for all $\psi \in {\Dom}(\bar{\partial}_f)$.
 As $\partial f\wedge$ is bounded, the above inequality is equivalent to $|\langle \varphi, \bar{\partial}(\psi) \rangle| \leq c'||\psi||$ for some positive constant $c'$.
 This means ${\Dom}(\bar{\partial}^*_f)= {\Dom}(\bar{\partial}^*)$ and they have the same Neumann boundary conditions.

For $\varphi, \psi \in \A(\bar D)$, we have the integration by parts:
\begin{align}
 \langle \bar{\partial}_f \varphi, \psi \rangle&= \langle \varphi, \vartheta_f \psi \rangle + \int_{\partial D} \langle \sigma(\bar{\partial}, dr)\varphi, \psi \rangle\\
 \langle \vartheta_f \varphi, \psi \rangle&= \langle \varphi, \bar{\partial}_f \psi \rangle + \int_{\partial D} \langle \sigma(\vartheta, dr)\varphi, \psi \rangle.
 \end{align}
 Here
 $$
 \vartheta_f=\vartheta+\fb_j\iota_{\pat_j},
 $$
 where $\vartheta$ represents the formal adjoint of $\bpat$ and $\iota_{\pat_{\jb}}$ is the contraction operator with the vector $\pat_\jb=\frac{\pat}{\pat \bar z^j}$, and
 $$
 \sigma(\bar{\partial}, dr)=\bpat r\wedge=\frac{\pat r}{\pat \bar z^j}d\bar z^j\wedge,\;\sigma(\vartheta, dr)=-\frac{\pat r}{\pat z_j}\iota_{\pat_{\jb}}.
$$
Hence we have
 $$
 {\Dom}(\bar{\partial}_{f}^{*}) \cap \A(\bar{D})=\{\varphi \in \A(\bar{D})|\sigma(\vartheta, dr)\varphi=0 \; on \; \partial D\}
 $$
Denote by $\Dcal^{p,q}=\{\varphi \in \A(\bar{D})|\sigma(\vartheta, dr)\varphi=0 \; on \; \partial D\}$ and $\Dcal^k=\oplus_{p+q=k}\Dcal^{p,q}$.

\begin{defn}Let
$\Delta_{f}=[\bar{\partial}_f,\bar{\partial}_{f}^{*}]=\bar{\partial}_f\bar{\partial}_{f}^{*}+\bar{\partial}_{f}^{*}\bar{\partial}_f
$ be the operator from $L^2(D)$ to $L^2(D)$ with domain $
{\Dom}(\Delta_f)=\{\varphi \in L^2(D)|\varphi \in {\Dom}(\bar{\partial}_f) \cap {\Dom}(\bar{\partial}_{f}^*); \bar{\partial}_f(\varphi) \in {\Dom}(\bar{\partial}_{f}^{*}) \; and \; \bar{\partial}_{f}^{*}(\varphi) \in {\Dom}(\bar{\partial}_f)\}$.
\end{defn}

\begin{pro}
 $\Delta_f$ is a linear, densely defined, closed self-adjoint operator.
 \end{pro}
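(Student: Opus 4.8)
The plan is to exhibit $(L^2_\bullet(D),\bpat_f)$ as a Hilbert complex and then obtain self-adjointness of its Laplacian by the quadratic-form method, the boundary condition being encoded in $\Dom(\bpat_f^*)$. Linearity is immediate. Since $f$ is bounded on $D$, the operator $\pat f\wedge$ is everywhere defined and bounded, so $\bpat_f=\bpat+\pat f\wedge$ is a bounded perturbation of the closed, densely defined operator $\bpat$; hence $\bpat_f$ is closed and densely defined with $\Dom(\bpat_f)=\Dom(\bpat)$, and $\bpat_f^*$ is closed, densely defined with $(\bpat_f^*)^*=\bpat_f$. Because $f$ is holomorphic one has $\bpat(\pat f)=0$, whence the graded commutator $[\bpat,\pat f\wedge]$ vanishes and $(\pat f\wedge)^2=\bpat^2=0$; therefore $\bpat_f^2=0$ and dually $(\bpat_f^*)^2=0$. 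Thus $\bpat_f$ raises total degree by one and $0\to L^2_0(D)\to\cdots\to L^2_{2n}(D)\to0$ is a genuine complex.

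Next I would record symmetry and nonnegativity. Put $Q(\varphi,\psi)=\langle\bpat_f\varphi,\bpat_f\psi\rangle+\langle\bpat_f^*\varphi,\bpat_f^*\psi\rangle$ with form domain $\Dom(Q)=\Dom(\bpat_f)\cap\Dom(\bpat_f^*)$, which contains the dense space $\bigoplus_k\Dcal^{k}$. For $\varphi,\psi\in\Dom(\Delta_f)$ the four domain conditions in the Definition permit two integrations by parts and give $\langle\Delta_f\varphi,\psi\rangle=Q(\varphi,\psi)=\langle\varphi,\Delta_f\psi\rangle$, so $\Delta_f$ is symmetric with $\langle\Delta_f\varphi,\varphi\rangle=Q(\varphi,\varphi)\ge0$. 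Since $\bpat_f,\bpat_f^*$ are closed, $Q$ is a closed nonnegative symmetric form and $Q(\varphi,\varphi)+\|\varphi\|^2$ is the graph norm making $\Dom(Q)$ a Hilbert space.

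To conclude self-adjointness I would verify the criterion that the nonnegative symmetric operator $\Delta_f$ satisfies $\im(\Delta_f+\id)=L^2(D)$. Given $g\in L^2(D)$, the form $Q(\cdot,\cdot)+\langle\cdot,\cdot\rangle$ is bounded and coercive on $\Dom(Q)$, so Lax--Milgram yields a unique $\varphi\in\Dom(Q)$ with $\langle\bpat_f\varphi,\bpat_f v\rangle+\langle\bpat_f^*\varphi,\bpat_f^* v\rangle=\langle g-\varphi,v\rangle$ for every $v\in\Dom(Q)$. The remaining task, which I expect to be the main obstacle, is to promote this weak solution to an element of $\Dom(\Delta_f)$: the form only controls $\bpat_f\varphi$ and $\bpat_f^*\varphi$ in $L^2$, and one must recover the second-order membership $\bpat_f\varphi\in\Dom(\bpat_f^*)$ and $\bpat_f^*\varphi\in\Dom(\bpat_f)$. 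Here the nilpotency is decisive. Using the orthogonal decomposition $L^2(D)=\overline{\im\bpat_f}\oplus\ker\bpat_f^*$ together with $\overline{\im\bpat_f}\subseteq\ker\bpat_f$, every $v\in\Dom(\bpat_f)$ satisfies $\bpat_f v=\bpat_f v_a$ with $v_a\in\ker\bpat_f^*\cap\Dom(\bpat_f)$; feeding $v_a$ into the weak identity, where the $\bpat_f^*$-term drops out, bounds $v\mapsto\langle\bpat_f\varphi,\bpat_f v\rangle$ by $\|g-\varphi\|\,\|v\|$, so $\bpat_f\varphi\in\Dom(\bpat_f^*)$. The dual decomposition $L^2(D)=\ker\bpat_f\oplus\overline{\im\bpat_f^*}$ with $\overline{\im\bpat_f^*}\subseteq\ker\bpat_f^*$ gives symmetrically $\bpat_f^*\varphi\in\Dom(\bpat_f)$. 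Then the weak identity reads $\langle\Delta_f\varphi,v\rangle=\langle g-\varphi,v\rangle$ on the dense set $\Dom(Q)$, so $\varphi\in\Dom(\Delta_f)$ and $(\Delta_f+\id)\varphi=g$. Hence $\im(\Delta_f+\id)=L^2(D)$, which forces $\Delta_f$ to be self-adjoint; $\Dom(\Delta_f)$ is then dense and, being self-adjoint, $\Delta_f$ is automatically closed.
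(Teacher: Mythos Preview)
Your proposal is correct and is precisely the quadratic-form / Hilbert-complex argument that the paper invokes by citing Proposition~1.3.8 of Folland--Kohn, and which the paper's subsequent Remark sketches via the Friedrichs extension of $\hat{\Delta}_f=\bpat_f\vartheta_f+\vartheta_f\bpat_f+I$ associated to the form $Q$. You have simply spelled out in detail---including the nilpotency-based promotion of the weak Lax--Milgram solution into $\Dom(\Delta_f)$---what the reference and the Remark leave implicit.
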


\begin{proof}The proof is the same as the proof of Proposition 1.3.8 in \cite{FK}.

\end{proof}

\begin{rem} We can consider the formal Laplacian $\hat{\Delta}_f=\bpat_f\vartheta_f+\vartheta_f\bpat_f+I$ defined on $\Dcal^{p,q}$. This operator has a unique Friedrichs self-adjoint extension related to the quadratic form $Q(\varphi,\phi)=(\bpat_f\varphi,\bpat_f\psi)+(\vartheta_f\varphi,\vartheta_f\psi)+(\varphi,\psi)$. This extended self-adjoint operator is just $\Delta_f+I$ and the equivalence relation is clear by the standard abstract theorem in functional analysis.

The self-adjointness of $\Delta_f$ is due to the $\bpat$-Neumann boundary condition which is characterized by
 \begin{equation}\label{sec2:Neum-bdry-1}
 \begin{aligned}
 {\Dom}(\Delta_f) \cap \A(\bar{D})=\{\varphi \in \A(\bar{D})|&\sigma(\vartheta, dr)\varphi=0 \;\text{and}\; \\
 &\sigma(\vartheta, dr)\bar{\partial} \varphi=0\;\text{on}\;\partial D\}.
 \end{aligned}
\end{equation}
\end{rem}
Similar to the $\bpat$-Neumann problem, here we want to solve the equation $\Delta_f \varphi=\eta\in L^2(D)$ under the $\bpat$-Neumann boundary condition. We call this as {\em $\bpat_f$-Neumann problem}.

Since $\Delta_f$ is self-adjoint and $Im(\bar{\partial}_f) \perp Im(\bar{\partial}^*_f)$, we get a \emph{weak} Hodge decomposition
 \begin{equation}
 L^2_k(D)={\Hcall}^k \oplus \overline{Im(\Delta_f)}={\Hcall}^k \oplus \overline{Im(\bar{\partial}_f)} \oplus \overline{Im(\bar{\partial}^*_f)}
 \end{equation}
 where ${\Hcall}^k$ denote the kernel of $\Delta_f$.

 To solve the $\bpat_f$-Neumann problem, we need to prove that all the range in the above decomposition are closed. The $\bpat_f$-Neumann problem will display different nature compared to the $\bpat$-Neumann problem, in which $f$ will play dominant role. This will be shown in next section.

 \section{$\bpat_f$-complexes, finite dimensionality and spectral gap}

 In this section, we first discuss various $\bpat_f$ complexes defined on a bounded pseudoconvex domain. Then we will show that the $L^2$ $\bpat_f$-complex has finite dimensional cohomology groups and there exists a spectral gap between $0$ and other spectra of $\Delta_f$. In the $\bar{\partial}$-Neumann problem, there is no estimate for $L^2$ integrable holomorphic $(p,0)$-forms, which is in the kernel of $\Delta_{\bar\partial}$, near the boundary. For this reason, we solve the $\bar{\partial}_f$-Neumann problem in a indirect way.  We avoid to estimate directly the behavior of the operator $\Delta_f$, which twist the $(p,0)$-forms and other types of forms, instead, we will use a classical result in multivariable spectra theory about the complex $(B^*(D),\pat f\wedge)$ and some results in the theory of unbounded linear operators.

 \subsection{$\bpat_f$-complexes}\

 There are various $\bpat_f$-complexes which are defined by smoothness or boundary value conditions. At first, we have the $L^2$ $\bpat_f$-complex
\begin{equation}
L^2(D): L^2_0(D) \xlongrightarrow{\bar{\partial}_f} L^2_1(D) \xlongrightarrow{\bar{\partial}_f} L^2_2(D) \xlongrightarrow{\bar{\partial}_f} \cdots
\end{equation}
 corresponding to $L^2$ integrable $p$-forms. The cohomology group is defined as
 $$
 H^k_{((2),\bpat_f)}=\frac{\{\varphi\in \Dom(\bpat_f)|\bpat_f\varphi=0\}}{\bpat_f (\Dom(\bpat_f))}
 $$

In addition, there are $\bpat_f$-complexes $\A^*(D),\A^*(\bD),\A^*_c(D)$, which correspond to smooth $p$-form on $D$, on $\bD$, and having compact support in $D$ respectively. We denote by $H^k_{\bpat_f}(\bD),H^k_{\bpat_f}(D),H^k_{(c,\bpat_f)}(D)$ the corresponding cohomology groups.

Let $\Ccal^{p,q}=\{\varphi \in \A(\bar{D})|\sigma(\bpat, dr)\varphi=0 \; on \; \partial D\}$ and $\Ccal^k=\oplus_{p+q=k}\Ccal^{p,q}$. We can take $\vartheta_f$ as a closed operator in $H(D)$ at first and then consider the $\vartheta_f$-Neumann problem, and in this case, we have $\Ccal^{k}=\A^{k}(\bD)\cap \Dom(\vartheta_f^*)$.

\begin{lem}
$\bpat_f \Ccal^k\subset \Ccal^{k+1}$.
 \end{lem}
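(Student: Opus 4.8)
The plan is to reduce the statement to a purely pointwise computation along $\partial D$. Since $\varphi \in \Ccal^k \subset \A(\bD)$ and $f$ is holomorphic on $\bD$, the form $\bpat_f\varphi = \bpat\varphi + \pat f \wedge \varphi$ is again smooth up to the boundary, so membership in $\Ccal^{k+1}$ reduces to verifying the single boundary condition $\sigma(\bpat,dr)(\bpat_f\varphi) = \bpat r \wedge \bpat_f\varphi = 0$ on $\partial D$. By linearity I may assume $\varphi$ is of pure type $(p,q)$, and I would split $\bpat r \wedge \bpat_f\varphi = \bpat r \wedge \bpat\varphi + \bpat r \wedge \pat f \wedge \varphi$ and treat the two pieces separately.

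The twisting term is immediate. Because $\bpat r$ and $\pat f$ are both $1$-forms they anticommute, so $\bpat r \wedge \pat f \wedge \varphi = -\pat f \wedge (\bpat r \wedge \varphi)$, which vanishes on $\partial D$ directly from the hypothesis $\bpat r \wedge \varphi = 0$ there. Thus the $\pat f \wedge$ contribution costs nothing, consistent with the remark that the partial-derivative structure of the $f_i$ is never used.

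The real content is the $\bpat$ term. First I note that $\bpat r \neq 0$ on $\partial D$: since $r$ is real one has $\bpat r = \overline{\pat r}$, hence $\|dr\|^2 = 2\|\bpat r\|^2 = 1$ forces $\bpat r \neq 0$. Consequently the condition $\bpat r \wedge \varphi = 0$ on $\partial D$ is, pointwise, exactly the statement that $\varphi|_{\partial D}$ is divisible by the nonvanishing $1$-form $\bpat r$. Choosing a smooth $(p,q-1)$-form $\psi$ near $\partial D$ with $\varphi|_{\partial D} = \bpat r \wedge \psi|_{\partial D}$ and applying the Hadamard lemma to $\varphi - \bpat r \wedge \psi$ (which vanishes on $\partial D$), I obtain a local decomposition $\varphi = \bpat r \wedge \psi + r\chi$ with $\psi,\chi$ smooth. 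Using $\bpat^2 r = 0$ gives $\bpat\varphi = -\bpat r \wedge \bpat\psi + \bpat r \wedge \chi + r\,\bpat\chi$, and wedging with $\bpat r$ kills the first two terms because $\bpat r \wedge \bpat r = 0$, leaving $\bpat r \wedge \bpat\varphi = r\,(\bpat r \wedge \bpat\chi)$. Since $r = 0$ on $\partial D$, this vanishes there, which completes the verification.

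The only genuine obstacle is this last term; everything hinges on the factorization $\varphi = \bpat r \wedge \psi + r\chi$, which converts the boundary condition into an honest wedge factor so that $\bpat r \wedge \bpat r = 0$ can be exploited. The computation is local near $\partial D$ and the conclusion is pointwise, so no gluing is needed. The same argument shows $\bpat\,\Ccal^k \subset \Ccal^{k+1}$, and the trivial treatment of the $\pat f \wedge$ term shows that $\bpat_f$ likewise preserves the analogous boundary-condition subcomplexes.
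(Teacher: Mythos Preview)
Your proof is correct and follows essentially the same route as the paper. The paper's argument is the one-line observation that any $\psi\in\Ccal^k$ can be written as $\psi=\bpat r\wedge\alpha+r\beta$, whence $\bpat_f\psi=\bpat r\wedge(\beta-\bpat_f\alpha)+r\,\bpat_f\beta\in\Ccal^{k+1}$; you derive the same decomposition (with the added justification via $\bpat r\neq 0$ and Hadamard's lemma) and perform the same computation, only organizing it by treating the $\pat f\wedge$ piece separately before invoking the decomposition for the $\bpat$ piece.
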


\begin{proof}If $\psi\in \Ccal^k$, then it can be written as $\psi=\bpat r\wedge \alpha+r\beta$ for $\alpha\in \A^{k-1}(\bD), \beta\in \A^k(\bD)$. Then $\bpat_f \psi=\bpat r\wedge (\beta-\bpat_f\alpha)+r\bpat_f \beta$ which is in $\Ccal^k$.
\end{proof}

This lemma shows that $(\Ccal^*,\bpat_f)$ forms a complex and has cohomology $H^k_{\bpat_f}(\Ccal)$.

As in \cite{FK}, we also have the Dirichlet or zero-boundary value cohomology
\begin{equation}
H^k_{(0,\bpat_f)}=\frac{\{\psi\in \A^k(\bD)|\bpat_f\psi=0,\psi|_{\pat D}=0 \}}{\bpat_f\{\psi\in A^{k-1}(\bD)|\psi|_{\pat D}=0,\bpat_f\psi|_{\pat D}=0\}}.
\end{equation}

\begin{pro}\label{sec3:prop-2} There exists isomorphism
$i:H^k_{(0,\bpat_f)}\cong H^k_{\bpat_f}(\Ccal)$.
 \end{pro}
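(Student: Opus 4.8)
The map $i$ is induced by inclusion of representatives, and the first thing to check is that it is well defined. If $\psi|_{\partial D}=0$ then $\bpat r\wedge\psi=0$ on $\partial D$, so $\psi\in\Ccal^k$; the same observation sends the coboundary space $\bpat_f\{\psi\mid \psi|_{\partial D}=0,\ \bpat_f\psi|_{\partial D}=0\}$ into $\bpat_f\Ccal^{k-1}$. The whole argument then rests on a single boundary device. Since $\|dr\|=1$ on $\partial D$, the $(0,1)$-form $\bpat r$ is nowhere zero there, so the pointwise complex $(\Lambda^{\bullet},\bpat r\wedge)$ is exact along $\partial D$; hence every $\varphi\in\Ccal^k$, being characterized by $\bpat r\wedge\varphi=0$ on $\partial D$, satisfies $\varphi|_{\partial D}=\bpat r\wedge\beta|_{\partial D}$ for some $\beta\in\A^{k-1}(\bD)$. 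An explicit primitive is $\beta=|\bpat r|^{-2}\,\iota_{\xi}\varphi$, where $\xi$ is the vector field with $\iota_{\xi}\bpat r=|\bpat r|^2$; it is smooth up to the boundary because $\bpat r\neq 0$ there, and it extends smoothly to $\bD$. The point of introducing $\beta$ is the Leibniz rule
\begin{equation}
\bpat_f(r\beta)=\bpat r\wedge\beta+r\,\bpat_f\beta,
\end{equation}
which on $\partial D$ reduces to $\bpat_f(r\beta)|_{\partial D}=\bpat r\wedge\beta|_{\partial D}=\varphi|_{\partial D}$, while $r\beta|_{\partial D}=0$ places $r\beta$ in $\Ccal^{k-1}$.

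To prove surjectivity I would take a $\bpat_f$-closed $\varphi\in\Ccal^k$ and set $\varphi'=\varphi-\bpat_f(r\beta)$ with $\beta$ as above. Then $\bpat_f\varphi'=0$, the correction lies in $\bpat_f\Ccal^{k-1}$ so $[\varphi']=[\varphi]$ in $H^k_{\bpat_f}(\Ccal)$, and $\varphi'|_{\partial D}=\varphi|_{\partial D}-\bpat r\wedge\beta|_{\partial D}=0$. Thus $\varphi'$ defines a class in $H^k_{(0,\bpat_f)}$ mapping to $[\varphi]$, so $i$ is onto.

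For injectivity I would start from $\psi$ with $\bpat_f\psi=0$, $\psi|_{\partial D}=0$ and $i[\psi]=0$, i.e.\ $\psi=\bpat_f\eta$ for some $\eta\in\Ccal^{k-1}$, and apply the same correction to $\eta$: writing $\eta|_{\partial D}=\bpat r\wedge\gamma|_{\partial D}$ I set $\eta'=\eta-\bpat_f(r\gamma)$. Then $\eta'|_{\partial D}=0$ and $\bpat_f\eta'=\bpat_f\eta=\psi$, so in particular $\bpat_f\eta'|_{\partial D}=\psi|_{\partial D}=0$. Hence $\eta'$ belongs to $\{\psi\mid \psi|_{\partial D}=0,\ \bpat_f\psi|_{\partial D}=0\}$ and exhibits $[\psi]=0$ in $H^k_{(0,\bpat_f)}$, proving $i$ injective.

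The only substantial input is the boundary observation that $\bpat r\wedge\varphi=0$ forces $\varphi|_{\partial D}$ into the pointwise image of $\bpat r\wedge$, together with smoothness of the primitive up to $\partial D$; I expect this to be the main point to get right, and it uses only that $dr\neq 0$ on $\partial D$, neither pseudoconvexity nor any hypothesis on the critical points of $f$. Everything else is formal: that $\bpat_f$ is a differential, which follows from $\bpat^2=0$, $\pat f\wedge\pat f=0$ and $\bpat\pat f=0$ (as $f$ is holomorphic), and the Leibniz rule for $\bpat_f$ against the scalar $r$.
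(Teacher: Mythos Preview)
Your proof is correct and follows essentially the same approach as the paper: both arguments hinge on writing an element $\varphi\in\Ccal^k$ as $\bpat r\wedge\alpha + r\beta$ near the boundary (you supply the explicit primitive $\alpha=|\bpat r|^{-2}\iota_\xi\varphi$, the paper just asserts the decomposition), and then correcting by $\bpat_f(r\alpha)$ to produce a cohomologous representative vanishing on $\partial D$. The only cosmetic gap is that your formula for $\beta$ is defined only in a collar where $\bpat r\neq 0$, so a cutoff is implicitly needed to extend $r\beta$ smoothly across $\bD$; this does not affect the argument since only the behavior on $\partial D$ matters.
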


 \begin{proof} Suppose that $\phi\in \A^k(\bD),\phi|_{\pat D}=0$, and $\phi=\bpat_f \psi$ with $\psi\in \Ccal^{k-1}$.
 Then $\psi$ has the form $\psi=\bpat r\wedge \alpha+r\beta$. This can be rewritten as
 $$
 \psi=\bpat_f(r\alpha)+r(-\bpat_f\alpha+\beta).
 $$
 Let $\psi_0=r(-\bpat_f\alpha+\beta)$. This gives $\phi=\bpat_f \psi_0$, which shows that $i$ is a well-defined injective map. To prove the surjectivity, suppose $\phi\in \Ccal^k$ and $\bpat_f\phi=0$. Then $\phi$ also has the expression $\phi=\bpat_f(r\alpha)+r(-\bpat_f\alpha+\beta)$. Hence $\phi$ is cohomologous to $r(-\bpat_f\alpha+\beta)$, which vanishes on $\pat D$.
 \end{proof}

 We will discuss other relations between these cohomologies in the following sections. Above all, we want to discuss the relation between the $L^2$ complex $(L^2(D),\bpat_f)$ and the $L^2$ holomorphic Koszul complex $(B^*(D),\pat f\wedge)$.

 \subsection{Koszul complex, finite dimensionality and spectral gap}\

 Let $B^k(D)$ be the $L^2$ integrable holomorphic $k$-form on $D$, i.e., $B^0(D)$ is the Bergman space on $D$ and $B^k(D)$ can be viewed as direct products of $B^0(D)$. The complex $(B^*(D),\pat f\wedge)$ is defined as
 $$0 \rightarrow B^0(D) \xlongrightarrow{\partial f \wedge} B^1(D) \xlongrightarrow{\partial f \wedge} \cdots \xlongrightarrow{\partial f \wedge} B^n(D) \rightarrow 0,$$
 whose cohomology are denoted by $H^*_{\partial f \wedge}(D)$.

 In 1970, J. L. Taylor ~\cite{Ta} developed a multivariable (joint) spectral theory.
 Given a Hilbert space X and a commuting $n$-tuples of bounded linear operators $T=(T_1,\cdots,T_n)$ on X, the joint spectra $\sigma(T,X)$ is the set of all $\lambda=(\lambda_1,\cdots,\lambda_n) \in \mathbb{C}^n$ such that $K^*(T-\lambda, B(D))$ is not acyclic.
 The essential joint spectra $\sigma_e(T,X)$ is the set of all $\lambda$ such that the cohomology of $K^*(T-\lambda, B(D))$ is not finite dimensional.
 The finite complex $K^*(T-\lambda, B(D))$ consists of the spaces
 $$K^p(T-\lambda,X)= X \otimes_{\mathbb{C}} \Lambda^p(\mathbb{C}^n) \quad (0 \leq p \leq n)$$
 and the coboundary operators
 $$d^p:K^p(T-\lambda,X) \rightarrow K^{p+1}(T-\lambda,X), \quad d^p(\varphi)= \tau \wedge \varphi$$
 where $\tau =(T_1-\lambda_1) \otimes e_1 + (T_2-\lambda_2) \otimes e_2 + \cdots (T_n-\lambda_n) \otimes e_n$ and $(e_1,e_2,\cdots,e_n)$ is the canonical basis of $\mathbb{C}^n$.

 The $L^2$ $\partial f \wedge$-complex $(B^*(D),\pat f\wedge)$ can be viewed as a model for Taylor's joint spectral theory.
 The Bergman space is a Hilbert space and the toeplitz operators defined by multiplication by $f_i= \frac{\partial f}{\partial z_i}, 1 \leq i \leq n$, is a commuting $n$-tuples of bounded linear operators.
 The $dz_i$'s can be viewed as a basis of $\mathbb{C}^n$.
 Thus the associated Koszul complex is exactly $(B^*(D),\partial f \wedge)$.

 Under our assumption, $D$ is bounded and pseudoconvex.
 By Theorem 8.1.1 and corollary 8.1.2 of  ~\cite{EP}, we have
 $$\sigma(z_1,\cdots,z_n,B^0(D))=\bar D$$
 and
 $$\sigma_e(z_1,\cdots,z_n,B^0(D))\subset \partial D.$$
 Furthermore by Theorem 8.2.1 and Proposition 8.2.5 of ~\cite{EP}, we have
 $$\sigma(f_1,\cdots,f_n, B^0(D))= \overline{(f_1,\cdots,f_n)(D)}$$
 and
 \begin{align*}
 \sigma_e(f_1,\cdots,f_n, B^0(D))&= (f_1,\cdots,f_n)(\sigma_e(z_1,\cdots,z_n,B^0(D)))\\
 &\subset(f_1,\cdots,f_n)(\partial D).
 \end{align*}

 Hence we have the simple conclusion:

 \begin{pro}\label{sec3:prop-1}
 Assume that $f$ is holomorphic on $\bD$ and has no critical points on $\pat D$, then
 $$
 0 \notin \sigma_e(f_1,\cdots,f_n, B^0(D)),
 $$
 which says that the complex $(B^*(D),\pat f\wedge)$ has at most finite dimensional cohomology group.
\end{pro}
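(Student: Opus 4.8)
The plan is to read off the conclusion directly from the spectral mapping computation already quoted from \cite{EP}, so that the only genuine work is to translate the geometric hypothesis on $f$ into a statement about the essential joint spectrum. First I would recall the inclusion stated above,
$$
\sigma_e(f_1,\cdots,f_n, B^0(D))\subset (f_1,\cdots,f_n)(\partial D),
$$
which is the spectral mapping formula for the essential spectrum combined with $\sigma_e(z_1,\cdots,z_n,B^0(D))\subset\partial D$. Granting this, it suffices to show that $0$ does not lie in the image $(f_1,\cdots,f_n)(\partial D)$.

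The key observation is that since $f_i=\pat f/\pat z_i$, a boundary point $p\in\partial D$ satisfies $(f_1(p),\cdots,f_n(p))=0$ if and only if all partial derivatives of $f$ vanish at $p$, i.e. if and only if $p$ is a critical point of $f$. The hypothesis that $f$ has no critical points on $\partial D$ therefore translates \emph{exactly} into $0\notin (f_1,\cdots,f_n)(\partial D)$. Combining this with the inclusion above yields $0\notin \sigma_e(f_1,\cdots,f_n,B^0(D))$.

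To finish, I would invoke the definition of the essential joint spectrum: $0\notin\sigma_e$ means precisely that the cohomology of the Koszul complex $K^*((f_1,\cdots,f_n), B^0(D))$ is finite dimensional. Since this Koszul complex has already been identified with $(B^*(D),\pat f\wedge)$, the stated finite-dimensionality of the cohomology groups $H^*_{\pat f\wedge}(D)$ follows immediately.

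Because all of the analytic content—the spectral mapping theorem for the essential spectrum of a commuting tuple of Toeplitz operators, together with the computations $\sigma(z_1,\cdots,z_n,B^0(D))=\bD$ and $\sigma_e(z_1,\cdots,z_n,B^0(D))\subset\partial D$—is imported from \cite{Ta,EP}, there is no real obstacle remaining in this argument; the entire proof reduces to the elementary identification of the common zero locus of the partials $f_1,\cdots,f_n$ with the critical set of $f$. If one wished to reprove the cited facts rather than quote them, the hard part would be establishing the spectral mapping theorem at the level of the \emph{essential} spectrum for Toeplitz tuples on the Bergman space, which is exactly where the boundedness and (strong) pseudoconvexity of $D$ are used.
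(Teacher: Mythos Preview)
Your argument is correct and matches the paper's approach exactly: the paper presents this proposition as a ``simple conclusion'' immediately following the quoted spectral inclusions from \cite{EP}, and your write-up merely spells out the one-line observation that the hypothesis on $f$ precludes $0$ from lying in $(f_1,\cdots,f_n)(\partial D)$.
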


 Now we turn to the discussion of the $L^2$ complex $(H^*(D),\bpat_f)$. The key theorem in this section is as follows.

 \begin{thm}\label{sec3:theo-1}
 There exists a quasisomorphism between the $L^2$ complex $(L^2(D),\bpat_f)$ and the complex $(B^*(D),\pat f\wedge)$.
 Moreover, their $p^{th}$ cohomology group vanishes for $n< p \leq 2n$.
 \end{thm}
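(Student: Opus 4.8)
The plan is to exhibit the natural inclusion of holomorphic forms as a quasi-isomorphism. For $\alpha\in B^k(D)$ the $(k,0)$-form $\alpha$ is $\bpat$-closed, and since $f$ is holomorphic on $\bD$ the form $\pat f$ is bounded, so $\pat f\wedge\alpha\in B^{k+1}(D)\subset L^2_{k+1}(D)$. Thus the inclusion $\iota\colon B^k(D)\hookrightarrow L^2_k(D)$ satisfies $\bpat_f\iota(\alpha)=\bpat\alpha+\pat f\wedge\alpha=\pat f\wedge\alpha=\iota(\pat f\wedge\alpha)$, so $\iota$ is a chain map from $(B^*(D),\pat f\wedge)$ to $(L^2(D),\bpat_f)$. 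The whole content of the theorem is that $\iota$ induces an isomorphism on cohomology.

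The conceptual reason is the following. Regard $K^{p,q}=L^2_{(p,q)}(D)$ as a double complex with vertical differential $\bpat\colon K^{p,q}\to K^{p,q+1}$ and horizontal differential $\pat f\wedge\colon K^{p,q}\to K^{p+1,q}$; these anticommute because $\bpat\pat f=0$, and the total complex graded by $p+q$ is exactly $(L^2(D),\bpat_f)$. Taking first the $\bpat$-cohomology, H\"ormander's $L^2$ existence theorem on the bounded pseudoconvex domain $D$ shows that the $L^2$ Dolbeault cohomology $H^{p,q}_{(2)}(D)$ vanishes for $q\ge 1$ and equals the Bergman space $B^p(D)$ for $q=0$. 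Hence the first page of the spectral sequence is concentrated in the row $q=0$, where the induced differential is $\pat f\wedge$; the second page is $H^k_{\pat f\wedge}(D)$, the sequence degenerates, and $H^k_{((2),\bpat_f)}\cong H^k_{\pat f\wedge}(D)$.

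To make this rigorous at the level of the \emph{unreduced} cohomology defined in the text, and to avoid manipulating the unbounded operator $\bpat$ through an abstract spectral sequence, I would realize the degeneration by an explicit descending induction on anti-holomorphic degree, using at each step H\"ormander's bounded solution operator for $\bpat$. For surjectivity of $\iota_*$: given $\varphi\in\Dom(\bpat_f)$ with $\bpat_f\varphi=0$, decompose $\varphi=\sum_q\varphi^{k-q,q}$; if $m$ is the top anti-holomorphic degree occurring, the bidegree $(k-m,m+1)$ part of the equation $\bpat_f\varphi=0$ forces $\bpat\varphi^{k-m,m}=0$, so for $m\ge 1$ one solves $\bpat\psi=\varphi^{k-m,m}$ in $L^2$ and replaces $\varphi$ by $\varphi-\bpat_f\psi$, which has strictly smaller top anti-holomorphic degree. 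Iterating lands one at a $\bpat_f$-closed $(k,0)$-form, i.e.\ an element of $B^k(D)$ annihilated by $\pat f\wedge$, a Koszul cocycle representing the same class. The same reduction applied to a primitive proves injectivity: if $\alpha\in B^k(D)$ with $\pat f\wedge\alpha=0$ equals $\bpat_f\eta$, one reduces $\eta$ modulo $\im\bpat_f$ to a holomorphic $(k-1,0)$-form $\beta\in B^{k-1}(D)$ with $\alpha=\pat f\wedge\beta$, so $\alpha$ is a Koszul coboundary.

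Finally, the vanishing for $n<p\le 2n$ is immediate once the isomorphism $H^p_{((2),\bpat_f)}\cong H^p_{\pat f\wedge}(D)$ is established: there are no nonzero holomorphic $p$-forms in $n$ complex variables, so $B^p(D)=0$ and hence $H^p_{\pat f\wedge}(D)=0$ for $p>n$. The main obstacle is the $L^2$ bookkeeping in the induction: one must verify that each component to be removed is genuinely $\bpat$-closed (extracted from the graded system of bidegree equations), that H\"ormander's theorem applies so the primitives lie in $\Dom(\bpat)=\Dom(\bpat_f)$, and that boundedness of $\pat f\wedge$ keeps every intermediate form in $L^2$. Carrying the argument out by this explicit zig-zag, rather than by citing a spectral sequence, is what yields an isomorphism of the unreduced cohomologies appearing in the statement.
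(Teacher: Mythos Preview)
Your proposal is correct and follows essentially the same route as the paper: both arguments peel off the top anti-holomorphic component of a $\bpat_f$-cocycle step by step using the $L^2$ existence theorem for $\bpat$ on bounded pseudoconvex domains (the paper's Theorem~\ref{sec3:theo-2}), reducing every class to a holomorphic Koszul cocycle, and the injectivity/well-definedness check is handled the same way. The only notable difference is that the paper additionally tracks the norm bounds $\|\psi\|\lesssim\|\varphi\|$ throughout the induction in degrees $p>n$, which it uses immediately afterward (via Theorem~\ref{sec3:theo-3}) to conclude that $\bpat_f$ has closed range there; your deduction of the vanishing from $B^p(D)=0$ is perfectly fine for the theorem as stated but does not by itself deliver that extra quantitative information.
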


 We are working in $L^2$ integrable category, so we must be careful to control the norms. Before proving Theorem \ref{sec3:theo-1}, we need the $L^2$ existence theorem for $\bar{\partial}$-Neumann problem \footnote{The assumption $n\geq 2$ doesn't matter in our references. This is because when $n=1$, the $\bar\partial$-Neumann condition is exactly the $0$-value Dirichlet condition for $(p,1)$-forms and all the existence and regularity theorems clearly hold by standard elliptic estimate.}.

\begin{thm}[\cite{Sh}]\label{sec3:theo-2}
 Let $D$ be a bounded pseudoconvex domain in $\mathbb{C}^{n}$ with $C^\infty$ smooth boundary.
 If $\bar{\partial} \varphi =0$ for some $\varphi \in L^2_{(p,q+1)}(D)$, then there exists $\psi \in L^2_{(p,q)}(D)$ such that $\varphi =\bar{\partial}\psi$ and $||\psi|| \leq c||\varphi||$.
 Here $0 \leq p \leq n$, $0 \leq q \leq n-1$ and $c$ is independent of the choice of $\varphi$.
 \end{thm}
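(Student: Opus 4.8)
The plan is to recognize this as the classical Hörmander $L^2$-existence theorem for $\bpat$ on a pseudoconvex domain and to prove it by the weighted $L^2$ method. Since the statement asserts only solvability together with the norm bound $\|\psi\|\le c\|\varphi\|$, and claims no boundary regularity, the full $\bpat$-Neumann apparatus is not needed here: a coercive a priori estimate combined with one functional-analytic duality step suffices. To fix notation I rename the datum to $g:=\varphi\in L^2_{(p,q+1)}(D)$ with $\bpat g=0$, and seek $u:=\psi\in L^2_{(p,q)}(D)$ solving $\bpat u=g$. Here $q+1$ runs over $1\le q+1\le n$, which is exactly the range in which the estimate below holds.

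The decisive first step is the coercive basic estimate. I would introduce the strictly plurisubharmonic weight $\phi(z)=|z|^2$, whose complex Hessian $(\pat^2\phi/\pat z^i\pat z^\jb)$ is the identity, and work with the weighted inner product and its Hilbert-space adjoint. Integration by parts on forms $\beta\in\A^{p,q+1}(\bD)\cap\Dom(\bpat^*)$ produces the Kohn--Morrey--Hörmander identity, whose right-hand side splits into an interior term carrying the complex Hessian of $\phi$ and a boundary integral over $\pat D$ carrying the Levi form $L_p$. For $\beta$ satisfying the $\bpat$-Neumann condition $\sigma(\vartheta,dr)\beta=0$ the boundary integrand is $L_p$ evaluated on tangential components, hence nonnegative by pseudoconvexity and discardable; since the Hessian of $\phi$ is the identity, the interior term dominates the weighted $\|\beta\|^2$. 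Because $D$ is bounded, $e^{-\phi}$ is pinched between two positive constants depending only on $\mathrm{diam}(D)$, so weighted and unweighted norms are equivalent and one obtains
\[
\|\bpat\beta\|^2+\|\bpat^*\beta\|^2\ \ge\ C\,\|\beta\|^2,\qquad \beta\in\Dom(\bpat)\cap\Dom(\bpat^*)\cap L^2_{(p,q+1)}(D),
\]
with $C$ depending only on $\mathrm{diam}(D)$, hence independent of $g$.

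The second step converts this estimate into solvability by duality. Using the orthogonal decomposition $L^2_{(p,q+1)}(D)=\ker\bpat\oplus(\ker\bpat)^\perp$ and the inclusion $(\ker\bpat)^\perp\subseteq(\im\bpat)^\perp=\ker\bpat^*$, I would write any $\beta\in\Dom(\bpat^*)$ as $\beta=\beta'+\beta''$ with $\beta'\in\ker\bpat\cap\Dom(\bpat^*)$ and $\beta''\in\ker\bpat^*$; then $\bpat^*\beta=\bpat^*\beta'$, while $g\in\ker\bpat$ forces $\langle g,\beta''\rangle=0$, so $\langle g,\beta\rangle=\langle g,\beta'\rangle$. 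Applying Cauchy--Schwarz and the coercive estimate to $\beta'$ (which satisfies $\bpat\beta'=0$) yields
\[
|\langle g,\beta\rangle|=|\langle g,\beta'\rangle|\le\|g\|\,\|\beta'\|\le \frac{\|g\|}{\sqrt{C}}\,\|\bpat^*\beta'\|=\frac{\|g\|}{\sqrt{C}}\,\|\bpat^*\beta\|.
\]
Thus the assignment $\bpat^*\beta\mapsto\langle g,\beta\rangle$ is well defined and bounded on the range of $\bpat^*$ inside $L^2_{(p,q)}(D)$; extending it by Hahn--Banach and representing it by Riesz produces $u\in L^2_{(p,q)}(D)$ with $\|u\|\le\|g\|/\sqrt{C}$ and $\langle u,\bpat^*\beta\rangle=\langle g,\beta\rangle$ for every $\beta\in\Dom(\bpat^*)$. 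This identity is precisely the weak statement $\bpat u=g$, and the bound gives $c=1/\sqrt{C}$, independent of $\varphi$ as required.

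I expect the main obstacle to be justifying the basic estimate for \emph{all} $\beta\in\Dom(\bpat)\cap\Dom(\bpat^*)$ rather than only for forms smooth up to $\bD$. The Kohn--Morrey--Hörmander identity is derived by integration by parts and so holds a priori only on $\A^{p,q+1}(\bD)\cap\Dom(\bpat^*)$, i.e. on $\Dcal^{p,q+1}$. Promoting it to the graph-closure requires the Friedrichs-type density lemma: forms smooth up to the boundary and satisfying the $\bpat$-Neumann condition are dense, in the graph norm $\beta\mapsto\big(\|\beta\|^2+\|\bpat\beta\|^2+\|\bpat^*\beta\|^2\big)^{1/2}$, in $\Dom(\bpat)\cap\Dom(\bpat^*)$. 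This approximation, which relies on the smoothness of $\pat D$ and a careful tangential-mollification argument, is the technical heart; once it is in place the remaining steps are formal. Alternatively one can avoid the boundary entirely by exhausting $D$ with smooth strictly pseudoconvex subdomains and passing to the limit in the weighted estimate, trading the density lemma for a limiting argument.
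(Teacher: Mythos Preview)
The paper does not give its own proof of this theorem: it is stated with an explicit citation to \cite{Sh} (Chen--Shaw) and used as a black box in the proof of Theorem~\ref{sec3:theo-1}. So there is no ``paper's proof'' to compare against; the authors are simply invoking the classical H\"ormander $L^2$-existence theorem for $\bpat$ on bounded pseudoconvex domains.

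Your proposal is exactly the standard weighted $L^2$ proof of that theorem, and the outline is correct. One small point of presentation: the Kohn--Morrey--H\"ormander identity, as you derive it, involves the \emph{weighted} adjoint $\bpat^*_\phi$, not the unweighted $\bpat^*$, so the clean way to proceed is to run the entire duality argument in the weighted space (obtaining $u$ with $\|u\|_\phi\le C^{-1/2}\|g\|_\phi$) and only then convert to unweighted norms using $e^{-\phi}\asymp 1$ on the bounded domain. Writing the coercive estimate directly with the unweighted $\bpat^*$, as you do after ``one obtains,'' requires an extra step since $\bpat^*_\phi-\bpat^*$ is a zero-order operator that must be absorbed; this is harmless but should be said. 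Your identification of the Friedrichs-type density lemma as the genuine technical input is accurate, and the alternative exhaustion argument you mention is indeed the route H\"ormander originally took to avoid boundary issues.
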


 We will also need the Banach's closed range theorem as below.
 \begin{thm}[\cite{Sh}]\label{sec3:theo-3}
 Let $T: X\rightarrow Y$ be a closed linear operator between two Hilbert spaces and $T'$ be the transpose of $T$. Then the following conditions are equivalent:
 \begin{enumerate}
 \item $T$ has closed range in $Y$.
 \item $T'$ has closed range in $X$.
 \item There exists positive constant $c$, such that $$||Tx|| \geq c||x||, \forall x \in {\Dom}(T) \cap Ker(T)^{\bot}$$
 \item There exists positive constant $c$, such that $$||T'y|| \geq c||y||, \forall y \in {\Dom}(T') \cap Ker(T')^{\bot}$$
 \end{enumerate}
 \end{thm}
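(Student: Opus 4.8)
The plan is to prove the four conditions equivalent by establishing the three links $(1)\Leftrightarrow(3)$, $(2)\Leftrightarrow(4)$, and $(1)\Leftrightarrow(2)$; chained together these close the cycle. Throughout I will use that, since $T$ is closed, $\operatorname{Ker}(T)$ is a closed subspace of $X$ (if $x_i\to x$ with $Tx_i=0$, closedness forces $x\in\Dom(T)$ and $Tx=0$), so the orthogonal splitting $X=\operatorname{Ker}(T)\oplus\operatorname{Ker}(T)^{\bot}$ is available, and likewise for $T'$. I will also invoke the standard orthogonality relations for a densely defined closed operator, $\overline{\operatorname{Ran}(T)}=\operatorname{Ker}(T')^{\bot}$ and $\overline{\operatorname{Ran}(T')}=\operatorname{Ker}(T)^{\bot}$, together with the reflexivity $(T')'=T$.

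For $(3)\Rightarrow(1)$: given a sequence with $Tx_i\to y$, replace each $x_i$ by its component $x_i'$ in $\operatorname{Ker}(T)^{\bot}$ (legitimate because $\operatorname{Ker}(T)\subset\Dom(T)$, so $x_i-x_i'\in\Dom(T)$, hence $x_i'\in\Dom(T)$ and $Tx_i'=Tx_i$); the inequality in $(3)$ then shows $\{x_i'\}$ is Cauchy, so $x_i'\to x$, and closedness of $T$ gives $x\in\Dom(T)$ with $Tx=y$, so $\operatorname{Ran}(T)$ is closed. For $(1)\Rightarrow(3)$: the restriction $T_0$ of $T$ to $\Dom(T)\cap\operatorname{Ker}(T)^{\bot}$ is a closed injective operator onto $\operatorname{Ran}(T)$; when $\operatorname{Ran}(T)$ is closed it is a Hilbert space, and the everywhere-defined inverse $T_0^{-1}\colon\operatorname{Ran}(T)\to\operatorname{Ker}(T)^{\bot}$ has closed graph, so the closed graph theorem makes it bounded — which is exactly the lower bound $\|Tx\|\ge\|T_0^{-1}\|^{-1}\|x\|$ of $(3)$. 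Running the identical argument on the closed operator $T'$ yields $(2)\Leftrightarrow(4)$.

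The substantive step is the duality $(1)\Leftrightarrow(2)$. Assume $(1)$, so $\operatorname{Ran}(T)$ is closed and $(3)$ holds with constant $c$. Since $\overline{\operatorname{Ran}(T')}=\operatorname{Ker}(T)^{\bot}$, it suffices to show each $u\in\operatorname{Ker}(T)^{\bot}$ already lies in $\operatorname{Ran}(T')$. I define a functional on $\operatorname{Ran}(T)$ by $Tx\mapsto\langle x,u\rangle$ for $x\in\Dom(T)\cap\operatorname{Ker}(T)^{\bot}$; it is well defined because $T_0$ is injective, and bounded because $|\langle x,u\rangle|\le\|x\|\,\|u\|\le c^{-1}\|Tx\|\,\|u\|$. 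Riesz representation on the Hilbert space $\operatorname{Ran}(T)$ produces $y$ with $\langle x,u\rangle=\langle Tx,y\rangle$ for all such $x$, and splitting an arbitrary $x\in\Dom(T)$ along $\operatorname{Ker}(T)\oplus\operatorname{Ker}(T)^{\bot}$ shows this identity persists for every $x\in\Dom(T)$ (the kernel part drops out since $u\perp\operatorname{Ker}(T)$). By the definition of the adjoint this means $y\in\Dom(T')$ and $T'y=u$, so $\operatorname{Ran}(T')=\operatorname{Ker}(T)^{\bot}$ is closed, which is $(2)$. The reverse implication $(2)\Rightarrow(1)$ is the same argument with $T$ and $T'$ interchanged, using $(T')'=T$.

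The main obstacle I expect is precisely this last duality step: showing that closedness of $\operatorname{Ran}(T)$ forces $\operatorname{Ran}(T')$ to exhaust \emph{all} of $\operatorname{Ker}(T)^{\bot}$ rather than merely be dense in it. The delicate point is verifying that the Riesz representative $y$ genuinely lands in $\Dom(T')$ — equivalently, that the functional $x\mapsto\langle Tx,y\rangle$ extends boundedly from $\operatorname{Ker}(T)^{\bot}$ to all of $\Dom(T)$ — and this is exactly where the uniform lower bound of $(3)$, i.e. the closed-range hypothesis, is indispensable. The remaining equivalences reduce to routine applications of the closed graph theorem and the closedness of kernels.
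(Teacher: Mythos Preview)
The paper does not supply its own proof of this theorem: it is quoted verbatim as Banach's closed range theorem with a citation to \cite{Sh}, and is used as a black box in the subsequent arguments. Your proposal is a correct and complete proof of the cited result, following the standard route (closed graph theorem for $(1)\Leftrightarrow(3)$, Riesz representation plus the orthogonality relations $\overline{\operatorname{Ran}(T)}=\operatorname{Ker}(T')^{\bot}$ for the duality $(1)\Leftrightarrow(2)$), so there is nothing to compare against and no gap to report.

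One small remark: you silently use that $T$ is densely defined (needed for $T'$ to be a well-defined operator and for $(T')'=T$); the theorem statement in the paper omits this hypothesis, but it is of course present in the source \cite{Sh} and is implicit throughout the paper's applications.
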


 \begin{proof}[ Proof of Theorem \ref{sec3:theo-1}]

 Assume $\bar{\partial}_f \varphi =0$ for some $\varphi \in L^2_p(D)$.
 To avoid too heavy notation, here and below we will use $a \lesssim b$ to denote 'there exists a constant $c>0$ such that $a \leq c \cdot b$'.

 Firstly we assume $n< p \leq 2n$.
 Let
 $$\varphi= \varphi^{n,p-n}+ \varphi^{n-1,p-n+1}+ \cdots + \varphi^{p-n,n}$$
 Then we have $\bar {\partial} \varphi^{p-n,n}=0$.
 By Theorem \ref{sec3:theo-2}, there exists $\psi^{p-n,n-1}$ such that $$\bar{\partial} \psi^{p-n,n-1}= \varphi^{p-n,n}$$ and $$||\psi^{p-n,n-1}||\lesssim ||\varphi^{p-n,n}||$$
 Then
 \begin{eqnarray*}
 \partial f \wedge \varphi^{p-n,n}+ \bar{\partial} \varphi^{p-n+1,n-1}
  &= &\bar{\partial} (\varphi^{p-n+1,n-1}- \partial f \wedge \psi^{p-n,n-1})\\
  &= &0
 \end{eqnarray*}
 Again by Theorem \ref{sec3:theo-2}, there exists $\psi^{p-n+1,n-2}$ such that $$\bar{\partial} \psi^{p-n+1,n-2}= \varphi^{p-n+1,n-1}- \partial f \wedge \psi^{p-n,n-1}$$ and
 \begin{eqnarray*}
 ||\psi^{p-n+1,n-2}|| &\lesssim & ||\varphi^{p-n+1,n-1}- \partial f \wedge \psi^{p-n,n-1}||\\
 &\lesssim & ||\varphi^{p-n+1,n-1}||+ ||\psi^{p-n,n-1}||\\
 &\lesssim & ||\varphi^{p-n+1,n-1}||+ ||\varphi^{p-n,n}||
 \end{eqnarray*}

 Inductively, we have $\psi^{p-n+k,n-1-k} \in L^2_{(p-n+k,n-1-k)}(D)$ such that
 $$\bar{\partial} \psi^{p-n+k,n-1-k}= \varphi^{p-n+k,n-k}- \partial f \wedge \psi^{p-n+k-1,n-k}$$ and
 $$||\psi^{p-n+k,n-1-k}|| \lesssim \sum_{i=0}^{k}||\varphi^{p-n+i,n-i}||$$
 Note that $\psi^{p-n-1,n}=0$ here.
 Let $\psi= \sum_{k=0}^{2n-p} \psi^{p-n+k,n-1-k}$, then $\bar{\partial}_f \psi= \varphi$ and $||\psi||\lesssim ||\varphi||$.
 This means that the $L^2$ $\bar{\partial}_f$-complex is exact at $p^{th}$ degree and thus the $p^{th}$ cohomology vanish for $n < p \leq 2n$.
 Moreover, by Theorem \ref{sec3:theo-3}, $\bar{\partial}_f$ has closed range at these degrees.

 Then we consider the case $0 \leq p \leq n$.
 Let
 $$\varphi= \varphi^{p,0}+ \varphi^{p-1,1}+ \cdots + \varphi^{0,p}$$
 Because $\bar{\partial}_f \varphi =0$, we have
 $$\bar{\partial} \varphi^{p-k,k}+ \partial f \wedge \varphi^{p-k-1,k+1}=0, 0 \leq k \leq p$$
 Similar to the discussion above, we have $\psi^{p-1-k,k}, 0 \leq k \leq p-1$ such that
 $$\bar{\partial} \psi^{p-k,k-1}= \varphi^{p-k,k}- \partial f \wedge \psi^{p-1-k,k}, 1 \leq k \leq p-1$$
 and
 \begin{eqnarray}
 \bar{\partial} (\varphi^{p,0}- \partial f \wedge \psi^{p-1,0})&=&0\\
 \partial f \wedge (\varphi^{p,0}- \partial f \wedge \psi^{p-1,0})&=& \partial f \wedge \varphi^{p,0}=0
 \end{eqnarray}
 Let $\psi= \sum_{k=0}^{p-1} \psi^{p-1-k,k}$, then $$\varphi= \bar{\partial}_f \psi + (\varphi^{p,0}- \partial f \wedge \psi^{p-1,0})$$
 Similar to the discussion above, norm of $\varphi^{p,0}- \partial f \wedge \psi^{p-1,0}$ can be controlled by that of $\varphi$ and $\psi$ and eventually by $\varphi$.
 Thus by (3.3) and (3.4), $\varphi^{p,0}- \partial f \wedge \psi^{p-1,0}$ is in $B^{p}(D)$ and represents an element in $H^p_{\partial f \wedge}(D)$.

 We define a map between the two complexes at the level of $L^2$ cohomology by:
 $$u: H^p_{\bar{\partial}_f}(D)\rightarrow H^p_{\partial f \wedge}(D), [\varphi]\mapsto [\varphi^{p,0}- \partial f \wedge \psi^{p-1,0}]$$
 If $[\varphi]= 0 \in H^p_{\bar{\partial}_f}(D)$, then we have $\varphi= \bar{\partial}_f \eta$,
 together with
 $$\varphi= \bar{\partial}_f \psi + (\varphi^{p,0}- \partial f \wedge \psi^{p-1,0})$$
 we have
 $$\varphi^{p,0}- \partial f \wedge \psi^{p-1,0}=\bar{\partial}_f (\eta-\psi)$$
 by counting degrees, we have
 $$\varphi^{p,0}- \partial f \wedge \psi^{p-1,0}=\partial f \wedge (\eta-\psi)^{p-1,0}$$
 i.e. $[\varphi^{p,0}- \partial f \wedge \psi^{p-1,0}] =0 \in H^p_{\partial f \wedge}(D)$ and thus $u$ is well defined.\\
 If $\eta \in B^p(D)$ represent a cohomology class, we have $\bar{\partial}_f \eta= 0$ and $u([\eta])= [\eta]$ and $u$ is surjective.\\
 If $u([\varphi])=[\varphi^{p,0}- \partial f \wedge \psi^{p-1,0}]=0 \in H^p_{\partial f \wedge}(D)$, then
 \begin{eqnarray*}
 \varphi&=& \bar{\partial}_f \psi + (\varphi^{p,0}- \partial f \wedge \psi^{p-1,0})\\
 &=& \bar{\partial}_f \psi+ \partial f \wedge \theta\\
 &=& \bar{\partial}_f (\psi+ \theta)
 \end{eqnarray*}
 i.e. $[\varphi]=0 \in  H^p_{\bar{\partial}_f}(D)$, and $u$ is injective.
 Thus $u$ is an isomorphism of cohomology groups.
 \end{proof}

 Now by the weak Hodge decomposition
 $$L^2_k(D) = {\Hcall}^k \oplus \overline{Im(\bar{\partial}_f)} \oplus \overline{Im(\bar{\partial}^*_f)}$$
 we have
 $$
 H^*_{((2),\bar{\partial}_f)}(D)=Ker(\bar{\partial}_f)/Im(\bar{\partial}_f) \cong {\Hcall}^* \oplus \overline{Im(\bar{\partial}_f)} / Im(\bar{\partial}_f),
 $$
 which is finite dimensional by Theorem \ref{sec3:theo-1}. By Corollary IV.1.13 of \cite{Go}: if a closed operator from a Banach space to another Banach space has finite cokernel, it must have closed range; we can conclude that $Im(\bar{\partial}_f)$ is closed. Now by Theorem \ref{sec3:theo-3}, $Im(\bar{\partial}^*_f)$ is also closed.
 Thus we have
 \begin{thm}[strong Hodge decomposition]\label{sec3:theo-4}
 \begin{equation}
 L^2_k(D) = {\Hcall}^k \oplus  {Im(\bar{\partial}_f)} \oplus  {Im(\bar{\partial}^*_f)}
 \end{equation}
 and the isomorphism
 \begin{equation}
 H^k_{((2),\bar{\partial}_f)}(D) \cong {\Hcall}^k
 \end{equation}
 \end{thm}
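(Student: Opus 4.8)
The strong decomposition differs from the weak one only in that the closures $\overline{\im\bpat_f}$ and $\overline{\im\bpat_f^*}$ are replaced by the ranges themselves, so the whole task reduces to showing that $\im\bpat_f$ and $\im\bpat_f^*$ are closed subspaces of $L^2_k(D)$. The plan is to deduce this from finite dimensionality of the cohomology together with two soft results from functional analysis. First I would record that, since by self-adjointness $\overline{\im\bpat_f^*}$ is the orthogonal complement of $\operatorname{Ker}(\bpat_f)$ in $L^2_k(D)$, the weak decomposition gives $\operatorname{Ker}(\bpat_f)\cap L^2_k(D)=\Hcall^k\oplus\overline{\im\bpat_f}$, whence
\begin{equation}
H^k_{((2),\bpat_f)}(D)=\frac{\operatorname{Ker}(\bpat_f)}{\im\bpat_f}\cong\Hcall^k\oplus\big(\overline{\im\bpat_f}\,/\,\im\bpat_f\big).
\end{equation}

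Next I would establish that the left-hand side is finite dimensional, and this is exactly where the hypotheses on $f$ enter. By the quasi-isomorphism of Theorem \ref{sec3:theo-1} one has $H^k_{((2),\bpat_f)}(D)\cong H^k_{\pat f\wedge}(D)$, and by Proposition \ref{sec3:prop-1} (i.e. $0\notin\sigma_e(f_1,\dots,f_n,B^0(D))$, which is where the absence of critical points on $\pat D$ is used) the Koszul cohomology $H^k_{\pat f\wedge}(D)$ is finite dimensional. Consequently the quotient $\overline{\im\bpat_f}/\im\bpat_f$ in the displayed isomorphism is finite dimensional as well.

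With finite dimensionality in hand the closedness is formal. Viewing $\bpat_f$ as a closed operator from $L^2_{k-1}(D)$ into the Hilbert space $\overline{\im\bpat_f}$, its range is $\im\bpat_f$ and its cokernel is precisely the finite-dimensional space $\overline{\im\bpat_f}/\im\bpat_f$; Corollary IV.1.13 of \cite{Go}, which asserts that a closed operator with finite cokernel has closed range, then forces $\im\bpat_f=\overline{\im\bpat_f}$. Once $\im\bpat_f$ is closed, the closedness of $\im\bpat_f^*$ follows from the equivalence of conditions (1) and (2) in the Banach closed range theorem (Theorem \ref{sec3:theo-3}) applied to $T=\bpat_f$. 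Substituting these two closed ranges into the weak Hodge decomposition yields the strong decomposition, and specializing the isomorphism above to the now-trivial quotient gives $H^k_{((2),\bpat_f)}(D)\cong\Hcall^k$; the finite dimensionality of the $\Hcall^k$ is then automatic, since they are direct summands of the finite-dimensional groups $H^k_{((2),\bpat_f)}(D)$.

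The hard part is not this statement but its inputs: I expect the genuine content to lie entirely in Theorem \ref{sec3:theo-1} and Proposition \ref{sec3:prop-1}, that is, in the reduction to Taylor's joint spectral theory and the computation $0\notin\sigma_e$, while the steps above are routine manipulations of closed operators on Hilbert space. The one point demanding care is the grading bookkeeping, namely verifying that $\overline{\im\bpat_f^*}=\operatorname{Ker}(\bpat_f)^{\perp}$ degree by degree so that $\operatorname{Ker}(\bpat_f)$ splits as $\Hcall^k\oplus\overline{\im\bpat_f}$ and the cohomology quotient is identified correctly.
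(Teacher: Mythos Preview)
Your proposal is correct and follows essentially the same route as the paper: from the weak decomposition write $H^k_{((2),\bpat_f)}\cong\Hcall^k\oplus\overline{\im\bpat_f}/\im\bpat_f$, use Theorem~\ref{sec3:theo-1} together with Proposition~\ref{sec3:prop-1} to get finite dimensionality, apply Corollary~IV.1.13 of \cite{Go} to conclude $\im\bpat_f$ is closed, and then invoke Theorem~\ref{sec3:theo-3} for $\im\bpat_f^*$. The only cosmetic difference is that you restrict the codomain of $\bpat_f$ to $\overline{\im\bpat_f}$ before citing the finite-cokernel lemma, whereas the paper applies it directly; both are valid readings of the same argument.
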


 For any
 $$\psi \in ({\Hcall}^*)^{\bot}=Ker(\bar\partial_f)^{\bot} + Ker(\bar\partial_f^*)^{\bot},$$
 let $\psi=\psi_1+\psi_2+\psi_3$ be the orthogonal decomposition of $\psi$ into
 $Ker(\bar\partial_f)^{\bot} \cap Ker(\bar\partial_f^*)$, $Ker(\bar\partial_f)^{\bot} \cap Ker(\bar\partial_f^*)^{\bot}$ and
 $Ker(\bar\partial_f) \cap Ker(\bar\partial_f^*)^{\bot}$.
 By Theorem \ref{sec3:theo-3} and closeness of range of $\bar\partial_f$ and $\bar\partial_f^*$,
 we then have
 \begin{eqnarray*}
 \langle \Delta_f \psi, \psi \rangle &=& ||\bar\partial_f(\psi)||^2+ ||\bar\partial_f^*(\psi)||^2\\
 &=& ||\bar\partial_f(\psi_1+\psi_2)||^2+ ||\bar\partial_f^*(\psi_2+\psi_3)||^2\\
 &\geq& c||\psi_1+\psi_2||^2+c||\psi_2+\psi_3||^2\\
 &=& c(||\psi_1||^2+2||\psi_2||^2+||\psi_3||^2)\\
 &\geq& c||\psi||^2
 \end{eqnarray*}
 for some positive constant $c$.
 That's, a spectral gap exists between $0$ and other spectra of $\Delta_f$.
 By Proposition \ref{sec3:theo-3}, the range of $\Delta_f$ is closed. So we obtain
 \begin{cor}\label{sec3:coro-1} Let $D$ be a bounded strongly pseudoconvex domain in $\mathbb{C}^{n}$ with $C^\infty$ smooth boundary. Let $f$ be a holomorphic function in $D$ without critical points on the boundary $\pat D$. Then the twisted Laplacian $\Delta_f$ has finite dimensional kernel and there exists a spectral gap between $0$ and other spectra of $\Delta_f$. The complex $(L^2(D),\bpat_f)$ has finite dimensional cohomology for $0 \leq p \leq n$ and zero cohomology for $n < p \leq 2n$.
 \end{cor}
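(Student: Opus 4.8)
The plan is to read the corollary off the machinery already assembled, in three moves: the strong Hodge decomposition (Theorem~\ref{sec3:theo-4}), the quasi-isomorphism with the Koszul complex (Theorem~\ref{sec3:theo-1}), and the finite dimensionality of Koszul cohomology (Proposition~\ref{sec3:prop-1}).

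First I would pin down the kernel and the cohomology. By construction $\Hcall^k=\ker\Delta_f\cap L^2_k(D)$, and Theorem~\ref{sec3:theo-4} supplies the isomorphism $H^k_{((2),\bpat_f)}(D)\cong\Hcall^k$. The map $u$ of Theorem~\ref{sec3:theo-1} then identifies $H^p_{((2),\bpat_f)}(D)$ with the Koszul cohomology $H^p_{\pat f\wedge}(D)$ in degrees $0\le p\le n$, and the latter is finite dimensional by Proposition~\ref{sec3:prop-1}. Combined with the vanishing $H^p_{((2),\bpat_f)}(D)=0$ for $n<p\le 2n$ (also Theorem~\ref{sec3:theo-1}), this simultaneously shows that $\ker\Delta_f=\Hcall^*$ is finite dimensional and delivers the asserted cohomology computation.

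Next I would produce the spectral gap from the a priori estimate recorded just above the corollary: for every $\psi$ orthogonal to $\Hcall^*$ one has $\langle\Delta_f\psi,\psi\rangle\ge c\|\psi\|^2$ with $c>0$. Since $\Delta_f$ is densely defined, closed, and self-adjoint and $\Hcall^*=\ker\Delta_f$ is exactly its zero eigenspace, the kernel is a reducing subspace, and on the complementary invariant subspace $(\Hcall^*)^\perp=\overline{\im\Delta_f}$ the quadratic-form bound forces, via the spectral theorem, the spectrum into $[c,\infty)$. Hence $\sigma(\Delta_f)\subset\{0\}\cup[c,\infty)$, which is the claimed gap; in particular $\im\Delta_f$ is closed.

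The substantive point, rather than a genuine obstacle, is the translation of the scalar inequality $\langle\Delta_f\psi,\psi\rangle\ge c\|\psi\|^2$ into a bona fide lower bound on the nonzero spectrum. This rests on $\Hcall^*$ reducing the self-adjoint operator $\Delta_f$ and on the closedness of the ranges of $\bpat_f$ and $\bpat_f^*$ (Theorem~\ref{sec3:theo-4} together with Theorem~\ref{sec3:theo-3}), which is precisely what makes the form bound available on all of $(\Hcall^*)^\perp$ rather than merely on a dense subset. Once that closedness is granted, the self-adjoint functional calculus yields the gap automatically, so the remainder is bookkeeping of facts already proved.
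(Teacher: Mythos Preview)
Your proposal is correct and follows essentially the same route as the paper: the corollary is simply a summary of the preceding discussion, invoking Theorem~\ref{sec3:theo-1} for the vanishing in degrees $>n$ and the identification with Koszul cohomology, Proposition~\ref{sec3:prop-1} for finite dimensionality, Theorem~\ref{sec3:theo-4} for $\Hcall^k\cong H^k_{((2),\bpat_f)}$, and the quadratic-form estimate $\langle\Delta_f\psi,\psi\rangle\ge c\|\psi\|^2$ on $(\Hcall^*)^\perp$ derived just above the corollary. Your explicit appeal to the spectral theorem to convert that estimate into $\sigma(\Delta_f)\subset\{0\}\cup[c,\infty)$ is a welcome elaboration of what the paper leaves implicit.
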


Now Theorem \ref{sec3:theo-4} and Corollary \ref{sec3:coro-1} gives Theorem \ref{sec1:theo-1}.

\begin{notn}
Similar to the equality $I=\Delta N+P$ for the $\bar\partial$-Neumann problem, let $P:L^2(D)\to \Hcall^*(D)$ be the projection operator and $G:L^2(D)\to \Dom(\Delta_f)$ be the Green operator, we have the decomposition
$$I=\Delta_f G+P.$$
\end{notn}

 \section{Global regularity}

 The operator $\Delta_f$ has the following expansion:
 \begin{align*}
 \Delta_f=\Delta_{\bpat} + \sum_{i,j}(f_{ij}(d\bar{z}_j \wedge)^* dz_i \wedge + \bar{f}_{ij}d\bar{z}_i \wedge (dz_j \wedge)^*) +\sum^n_{i=1}|f_i|^2.
 \end{align*}
 Here $\Delta_{\bpat}=\bpat^*\bpat+\bpat\bpat^*$ and the last two summands, denoted by $L_f$ and $|\nabla f|^2$ respectively, are of order 0. Hence $\Delta_f$ is an elliptic operator of second order and has the interior regularity estimate. In the following, we say that an operator $T$ is globally regular if and only if it preserves $\A(\bar D)$. $T$ is exactly regular if it maps $W^s(D)$ continuously to itself for any non-negative integers. Exact regularity \emph{a priori} means global regularity by Sobolev imbedding theorem.

 To obtain a global estimate, we need sharper estimate about $\bar \partial$-Neumann operator on strongly pseudoconvex domains.
 We state the results needed in our proof in the following.

 \begin{thm}\label{sec5:thm-1}
 For the $\bar\partial$-Neumann problem on a strongly pseudoconvex domain $D$ with smooth boundary, let $P$ be the projection to space of harmonic forms and $N_{(p,q)}$ be the Neumann operator on $(p,q)$-forms. Then there exists positive constants $c_s$ depends only on $s$ such that the following global estimates hold:
 \begin{enumerate}
 \item $P$ is exactly regular, i.e. it maps $W^s(D)$ continuously to itself.
 \item $||N_{(0,0)}\psi||_{s+\frac{1}{2}} \leq c_s||\psi||_s$.
 \item $||\bar\partial^* N_{(p,q)} \psi||_{s+\frac{1}{2}}+||\bar\partial N_{(p,q)} \psi||_{s+\frac{1}{2}} \leq c_s||\psi||_s$ for $q \geq 1$.
 \item $||N_{(p,q)}\psi||_{s+1} \leq c_s||\psi||_s$ for all $q \geq 1$.
 \end{enumerate}
 \end{thm}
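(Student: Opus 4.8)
\emph{Strategy.} The four assertions are the classical sharp Sobolev estimates for the $\bpat$-Neumann problem on a strongly pseudoconvex domain, and every one of them is driven by a single analytic input: Kohn's subelliptic $\tfrac12$-estimate. The plan is to isolate that estimate, cite it from \cite{FK} and \cite{Sh}, and then read off (1)--(4) by standard bookkeeping. Concretely, the basic a priori inequality states that there is $C_s>0$ such that for $q\ge 1$ and every $\varphi\in\A^{p,q}(\bD)\cap\Dom(\bpat^*)$,
\begin{equation*}
\|\varphi\|_{s+\frac12}^2\;\le\;C_s\bigl(\|\bpat\varphi\|_s^2+\|\bpat^*\varphi\|_s^2+\|\varphi\|_s^2\bigr).
\end{equation*}
The first thing I would do is promote this a priori inequality, which is valid for smooth forms, to a genuine estimate on $N\psi$ via the Kohn--Nirenberg elliptic-regularization argument (Friedrichs mollification, followed by passage to the limit); this is the step where strong pseudoconvexity and the smoothness of $\pat D$ are really used.

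Next I would treat the case $q\ge1$, which yields items (3) and (4). Here strict positivity of the Levi form produces, through the Kohn--Morrey--H\"ormander identity, the coercive bound $\|\bpat\varphi\|^2+\|\bpat^*\varphi\|^2\gtrsim\|\varphi\|^2$; in particular the harmonic space $\Hcall^{p,q}$ vanishes, so $P=0$ and $\Delta N=I$ with $N$ a genuine two-sided inverse. Substituting $\varphi=N\psi$ into the $\tfrac12$-estimate and combining with the energy identity $\|\bpat N\psi\|^2+\|\bpat^*N\psi\|^2=\langle\psi,N\psi\rangle$ gives the half-gain bounds of (3). Iterating the estimate in the Sobolev index, with tangential difference quotients controlling the normal derivatives, upgrades the gain of the operator $N$ itself from $\tfrac12$ to a full derivative, which is (4).

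For the case $q=0$, giving (1) and (2), the harmonic space is the entire $L^2$ space of holomorphic $(p,0)$-forms and $P$ is the associated Bergman-type projection. I would use the homotopy formula $I=P+\bpat^*N_{(p,1)}\bpat$, valid because $\bpat^*$ annihilates $(p,0)$-forms, to write $P=I-\bpat^*N_{(p,1)}\bpat$ and reduce (1) to the regularity of $N_{(p,1)}$ just proved. The subtlety is that the naive Sobolev count through this composition loses a half-derivative, so the \emph{exact} regularity asserted in (1) is in fact the sharper theorem of Kohn on the Bergman projection, which I would cite rather than rederive. Item (2) then follows by expressing $N_{(0,0)}$ through the $(0,1)$-operator and applying the half-gain estimate (3), which produces precisely the $\tfrac12$-gain $\|N_{(0,0)}\psi\|_{s+\frac12}\lesssim\|\psi\|_s$ and no more.

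The main obstacle is concentrated in two places. The first is the subelliptic $\tfrac12$-estimate together with its regularization: this is the microlocal heart of the subject, where the positivity of the Levi form must be extracted from the boundary and commutator terms in the integration by parts. The second, more hidden, is the exactness in (1), i.e.\ the absence of a half-derivative loss in the Bergman projection, which does not follow from the formal composition of the $N$-estimates and is a genuine theorem in its own right. Since all of this is classical, I would present Theorem~\ref{sec5:thm-1} as a citation to \cite{FK} and \cite{Sh}, with the reductions above serving as the guide for how the four items fit together.
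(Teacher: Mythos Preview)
Your proposal is correct and aligns with the paper's own treatment: the paper likewise presents (1), (3), (4) as direct citations to \cite{Sh} (specifically Theorem~5.3.10 and the discussion around Corollary~5.2.7 and Theorem~6.2.2 there), and derives (2) from (3) and (4) via the identity expressing $N_{(0,0)}$ through $N_{(0,1)}\bpat$ (Equality~5.3.34 in \cite{Sh}), namely $\|N_{(0,0)}\psi\|_{s+\frac12}\lesssim\|N_{(0,1)}\bpat\psi\|_s\lesssim\|\bpat\psi\|_{s-1}\lesssim\|\psi\|_s$. Your more detailed sketch of the underlying subelliptic machinery is a faithful expansion of what these citations contain, so no gap or divergence from the paper's approach is present.
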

 
 \begin{proof}
 All of them are classical results for $\bar\partial$-Neumann problem. For (1), see comments behind Corollary 5.2.7 and Theorem 6.2.2 in \cite{Sh}. (3) and (4) is exactly Theorem 5.3.10 in \cite{Sh}. For (2), by Equality 5.3.34 in \cite{Sh},(3) and(4)
 $$||N_{(0,0)}\psi||_{s+\frac{1}{2}} \lesssim ||N_{(0,1)}\bar\partial \psi||_s \lesssim ||\bar\partial \psi||_{s-1} \lesssim ||\psi||_s.$$
 \end{proof}

 \begin{lem}\label{sec5:lem-1}
 Assume $D$ is strongly pseudoconvex with smooth $\partial D$ and $f$ has no critical points on $\partial D$. $u$ is a function in ${\Dom}(\Delta_f)$.
 If $$\Delta u+ |\nabla f|^2 u=g$$ for some function $g \in W^s_0(D)$, then $u \in W^s_0(D)$.
 \end{lem}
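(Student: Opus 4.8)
The plan is to reduce the statement to a gain-of-$\tfrac{1}{2}$-derivative bootstrap, treating separately the Bergman (holomorphic) component of $u$ and its orthogonal complement. First I would record that on functions the zeroth-order operator $L_f$ in the expansion $\Delta_f = \Delta_{\bpat} + L_f + |\nabla f|^2$ vanishes identically (each of its terms first lowers the $(1,0)$- or $(0,1)$-degree of its argument, both of which are $0$ on a function), so the hypothesis reads exactly $\Delta_f u = \Delta_{\bpat} u + |\nabla f|^2 u = g$ with $u \in \Dom(\Delta_{\bpat})$ satisfying the $\bpat$-Neumann condition on $(0,0)$-forms. Writing $V := |\nabla f|^2$ — a function smooth up to $\bar{D}$ which, since $f$ has no critical point on $\partial D$, is bounded below by some $c_0 > 0$ on a neighbourhood $U$ of $\partial D$ — and using $\Delta_{\bpat} u = g - Vu$ together with the $\bpat$-Neumann decomposition $N_{(0,0)}\Delta_{\bpat} = I - P$ on functions, I decompose $u = Pu + N_{(0,0)}(g - Vu)$, where $P$ is the Bergman projection and $h := Pu$ is holomorphic. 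The term $u' := N_{(0,0)}(g - Vu)$ is harmless: by Theorem~\ref{sec5:thm-1}(2), $\|u'\|_{t+1/2} \lesssim \|g - Vu\|_t \lesssim \|g\|_t + \|u\|_t$, so it gains half a derivative over $u$.

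The substance is the holomorphic part $h$. Projecting the equation gives $P(Vu) = Pg$, whence the Toeplitz equation $T_V h := P(Vh) = Pg - P(Vu')$ on the Bergman space. Here the Kohn formula $I = P + \bpat^* N_{(0,1)}\bpat$ yields the crucial identity $T_V h = Vh - \bpat^* N_{(0,1)}\bigl[(\bpat V)h\bigr]$, where I used $\bpat(Vh) = (\bpat V)h$ because $h$ is holomorphic; in particular the subtracted term carries no derivative of $h$, so Theorem~\ref{sec5:thm-1}(3) bounds it in $W^{t+1/2}$ by $\|(\bpat V)h\|_t \lesssim \|h\|_t$. Consequently $Vh = T_V h + (\text{a term one half-derivative smoother than } h)$. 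Now I divide by $V$: on $U$ the function $V^{-1}$ is smooth, so $h = V^{-1}(Vh)$ is as regular as $Vh$ near $\partial D$, while on the compact interior $\overline{D\setminus U}$ the function $h$ is holomorphic, hence $C^\infty$ with all Sobolev norms finite. This division is exactly where the positivity of $|\nabla f|^2$ near the boundary enters, and I expect it to be the main obstacle: the holomorphic component cannot be controlled by $\bpat u$, and only the ellipticity of the symbol $V$ at $\partial D$ lets me recover its boundary regularity. Interior critical points of $f$, where $V = 0$, are innocuous since $h$ is smooth there anyway.

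With these two pieces in hand the bootstrap closes. Suppose $u \in W^t_0(D)$ with $t \le s$. Then $u' \in W^{t+1/2}_0$, and, using that $P$ is exactly regular (Theorem~\ref{sec5:thm-1}(1)), $Pg \in W^s$ and $P(Vu') \in W^{t+1/2}$, so $T_V h \in W^{\min(s,\,t+1/2)}$; combined with the previous paragraph this gives $Vh \in W^{\min(s,\,t+1/2)}$ and hence $h \in W^{\min(s,\,t+1/2)}_0$. Adding the two pieces, $u = h + u' \in W^{\min(s,\,t+1/2)}_0$, i.e. $u$ gains half a derivative. Starting from $u \in W^0_0 = L^2$ and iterating finitely many times — with $g \in W^s$ never becoming the bottleneck — raises $u$ through $W^{1/2}_0, W^1_0, \dots$ until $u \in W^s_0(D)$, as claimed. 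The only non-formal inputs are the global estimates of Theorem~\ref{sec5:thm-1} and the elementary division by $V$ near $\partial D$.
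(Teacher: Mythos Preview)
Your proof is correct and follows essentially the same route as the paper: split $u$ into its Bergman projection $h=Pu$ and the complement $u'=N_{(0,0)}(g-Vu)$, gain half a derivative on $u'$ via Theorem~\ref{sec5:thm-1}(2), use the Kohn formula $I=P+\bpat^*N_{(0,1)}\bpat$ together with $\bpat(Vh)=(\bpat V)h$ to control $Vh$ in $W^{t+1/2}$, and then invoke the nonvanishing of $|\nabla f|^2$ near $\partial D$ (plus interior smoothness) to recover $h$ itself. The only cosmetic difference is that the paper passes through $|\nabla f|^2 u$ and cites interior ellipticity of $\Delta+|\nabla f|^2$ rather than interior holomorphicity of $h$, but the two arguments are interchangeable.
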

 Notice that the $0^{th}$ cohomology $H^0_{((2),\bar\partial f)}$ is obviously zero, so $0$ is not in the spectrum of $\Delta_f$. Therefore by Corollary \ref{sec3:coro-1}, the spectrum of $\Delta_f=\Delta + |\nabla f|^2$  has a positive lower bound and $\Delta + |\nabla f|^2$ has a bounded inverse $G_0$ with $||G_0 g|| \leq c||g||$.
 
 \begin{proof}
 By definition, $u \in L^2_0(D)$.
 Now assume $u \in W^k_0(D)$ for some $k \leq s-\frac{1}{2}$.
 Let $u=Pu+u^{\bot}$ be the decomposition of $u$ into a holomorphic function and the orthogonal part.
 Then there is 
 $$
 \Delta u^{\bot}=g-|\nabla f|^2 u,\;
 Pg=P(|\nabla f|^2 u).
 $$
 By Theorem \ref{sec5:thm-1}, we have
 $$
 u^{\bot}=N_{(0,0)}(g-|\nabla f|^2 u)\in W_0^{k+\frac{1}{2}}(D).
 $$ 
 On the other hand, we have
 $$
P(|\nabla f|^2 Pu)+P(|\nabla f|^2 u^{\bot})= P(|\nabla f|^2 u)=Pg \in W^s_0(D).
 $$
 Then it follows that
 $$
 P(|\nabla f|^2 u^{\bot}),P(|\nabla f|^2 Pu)  \in W_0^{k+\frac{1}{2}}(D).
 $$ 
 Now we have 
 \begin{eqnarray*}
 |||\nabla f|^2 Pu||_{k+\frac{1}{2}}&=&||P(|\nabla f|^2 Pu)+ \bar\partial^* N_{(0,1)}\bar{\partial} (|\nabla f|^2 Pu)||_{k+\frac{1}{2}}\\
 &\leq& ||P(|\nabla f|^2 Pu)||_{k+\frac{1}{2}} + ||\bar\partial^* N_{(0,1)}\bar\partial (|\nabla f|^2 Pu)||_{k+\frac{1}{2}}\\
 &\lesssim& ||P(|\nabla f|^2 Pu)||_{k+\frac{1}{2}} + ||\bar\partial (|\nabla f|^2 Pu)||_k\\
 &=& ||P(|\nabla f|^2 Pu)||_{k+\frac{1}{2}} + ||Pu\, \bar\partial |\nabla f|^2||_k\\
 &\lesssim& ||P(|\nabla f|^2 Pu)||_{k+\frac{1}{2}} + ||Pu||_k\\
 &\lesssim& ||P(|\nabla f|^2 Pu)||_{k+\frac{1}{2}} + ||u||_k,
 \end{eqnarray*}
 which gives 
 $$
 |\nabla f|^2 Pu \in W_0^{k+\frac{1}{2}}(D).
 $$
 Then there is
 $$
 |\nabla f|^2 u = |\nabla f|^2 Pu +|\nabla f|^2 u^{\bot} \in W_0^{k+\frac{1}{2}}(D).
 $$
 Since $\Delta + |\nabla f|^2$ is elliptic in the interior of $D$ and $|\nabla f|^2$ is nonzero on the boundary $\partial D$, we can apply elliptic estimate in the interior and divide by $|\nabla f|^2$ near the boundary to conclude that $u \in W_0^{k+\frac{1}{2}}(D)$.
 Now by induction, $u \in W^s_0(D)$ holds.
 \end{proof}
 
 \begin{rem}
 We can NOT expect that $u$ has higher regularity than $g$, which will lead to the compactness of $G_0$ by Rellich's lemma.
 For example, when $|\nabla f|^2$ happens to be a positive constant $c$, which is the case when $f= \sum^n_{i=1}z^i$, $G_0$ will
 have a non-zero eignvalue $\frac{1}{c}$ and the infinite dimensional Bergman space as the corresponding eignspace.
 Thus $G_0$ can not be compact and that's why we use an indirect way to prove the strong decomposition theorem.
 \end{rem}

 \begin{pro}\label{sec5:pro-1}
 Assume $D$ is strongly pseudoconvex with smooth $\partial D$ and $f$ has no critical points on $\partial D$. Then the Green operator $G$ is exactly regular and ${\Hcall}^* \subset \A^*(\bar D)$.
 \end{pro}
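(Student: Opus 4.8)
The plan is to exploit the expansion $\Delta_f=\Delta_{\bpat}+L_f+|\nabla f|^2$, in which $L_f$ and $|\nabla f|^2$ are of order zero while $\Delta_{\bpat}$ is the type-preserving $\bar\partial$-Laplacian, and to bootstrap Sobolev regularity for $u=G\psi$, which solves $\Delta_f u=\psi-P\psi$ by the identity $I=\Delta_f G+P$. Since $\Delta_f$ is elliptic of second order, $u$ is smooth in the interior and everything reduces to estimates up to $\partial D$; moreover $f$ has no critical point on $\partial D$, so $|\nabla f|^2\ge\delta>0$ in a boundary collar, which will let me divide by $|\nabla f|^2$ there. The core is an induction: assuming $u\in W^j(D)$ (base case $j=0$ from $u\in\Dom(\Delta_f)$) and $\psi\in W^s(D)$ with $j+\tfrac12\le s$, I will gain half a derivative and obtain $u\in W^{j+\frac12}(D)$ with $\|u\|_{j+\frac12}\lesssim\|u\|_j+\|\psi\|_{j+\frac12}$, iterating up to level $s$.

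Decomposing by Hodge type, $u=\sum_{p+q=k}u^{p,q}$, the $(p,q)$-component of the equation reads $\Delta_{\bpat}u^{p,q}=g^{p,q}-|\nabla f|^2 u^{p,q}-(L_fu)^{p,q}$, where $g=\psi-P\psi$ and $(L_fu)^{p,q}$ only couples to the adjacent components $u^{p\pm1,q\mp1}$. Because $L_f$ and $|\nabla f|^2$ are of order zero, the whole right-hand side is controlled in $W^j$ by $\|u\|_j+\|g\|_j$, so \emph{no} derivative is lost through the coupling. For every component with $q\ge1$ the $\bar\partial$-Laplacian has no harmonic forms on the strongly pseudoconvex $D$, hence $u^{p,q}=N_{(p,q)}\Delta_{\bpat}u^{p,q}$, and the strong subelliptic estimate of Theorem~\ref{sec5:thm-1}(4) gains a \emph{full} derivative: $\|u^{p,q}\|_{j+1}\lesssim\|u\|_j+\|g\|_j$. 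Thus all components off the $(k,0)$ diagonal are harmless.

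The crux, and the main obstacle, is the purely holomorphic-type component $u^{k,0}$, for which there is no subelliptic gain. Here $\Delta_{\bpat}$ acts coefficientwise as the scalar $\bar\partial$-Laplacian $\Delta$, and since $(L_fu)^{k,0}$ involves only $u^{k-1,1}$ --- already improved to $W^{j+1}$ by the previous step --- the coefficients satisfy $\Delta u_I+|\nabla f|^2u_I=(\text{a term already in }W^{j+\frac12})$. This is exactly the situation of Lemma~\ref{sec5:lem-1}: I split $u^{k,0}$ into its Bergman (holomorphic) part and the orthogonal remainder, bound the remainder via $N_{(0,0)}$ using Theorem~\ref{sec5:thm-1}(2), and treat the holomorphic part by multiplying by $|\nabla f|^2$ and invoking the exact regularity of the Bergman projection (Theorem~\ref{sec5:thm-1}(1)) together with Theorem~\ref{sec5:thm-1}(3); interior ellipticity plus division by the boundary-nonvanishing $|\nabla f|^2$ then yields $u^{k,0}\in W^{j+\frac12}$. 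This $(k,0)$-piece is the bottleneck that caps the per-step gain at $\tfrac12$, which is why $G$ can only be shown \emph{exactly} regular (preserving $W^s$) rather than smoothing --- consistent with the Remark following Lemma~\ref{sec5:lem-1}.

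Finally I would assemble the conclusions. Running the bootstrap with $\psi$ replaced by $0$ shows every $h\in\Hcall^*$ lies in $\bigcap_s W^s(D)$, so $\Hcall^*\subset\A^*(\bar D)$ by Sobolev embedding; being finite dimensional (Corollary~\ref{sec3:coro-1}) and smooth, the projection $P$ onto $\Hcall^*$ is trivially exactly regular, so $g=\psi-P\psi\in W^s$ whenever $\psi\in W^s$. The bootstrap then gives $\|G\psi\|_s\lesssim\|\psi\|_s$, i.e. $G$ is exactly regular. The one point demanding care is that the $\bar\partial$-Neumann boundary conditions built into $\Dom(\Delta_f)$ decouple by Hodge type --- $\sigma(\vartheta,dr)$ preserves $p$ and lowers $q$, and $\partial f\wedge$ preserves these conditions since $\partial f$ carries no $d\bar z$ --- so each $u^{p,q}$ genuinely lies in the domain of the corresponding Neumann operator and the estimates of Theorem~\ref{sec5:thm-1} may be applied componentwise.
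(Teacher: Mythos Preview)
Your proposal is correct and follows essentially the same route as the paper: decompose by Hodge type, use the subelliptic gain of $N_{(p,q)}$ for $q\ge1$ on the components with positive anti-holomorphic degree, handle the $(k,0)$-piece via Lemma~\ref{sec5:lem-1}, and bootstrap. The only cosmetic difference is that the paper lumps all $q\ge1$ components into a single $\varphi'$ and phrases the outer induction as a full-derivative gain per step (confining the half-step iteration to the proof of Lemma~\ref{sec5:lem-1}), whereas you thread the half-derivative increments through the whole argument; both organizations are fine.
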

 
 \begin{proof}
 Assume $\Delta_f \varphi =\psi$ and $\psi \in W^s_p(D)$.\
 
 For $0 \leq p \leq n$, every $\varphi \in L^2_p(D)$ can be decomposed by types as $\varphi= \varphi^{p,0}+\varphi^{'}$.
 We have
 $$\Delta \varphi^{'}= \psi^{'}-|\nabla f|^2 \varphi^{'}- (L_f \varphi)^{'} \in L^2_p(D)$$
 According to Theorem \ref{sec5:thm-1}, $\varphi^{'} \in W^1_p(D)$.
 As $L_f$ always transform $(p,q)$-forms into sum of $(p-1,q+1)$-forms and $(p+1,q-1)$-forms, $\varphi^{p,0}$ does not contribute to the $(p,0)$ component of $L_f \varphi$. Therefore
 \begin{eqnarray*}
 \Delta \varphi^{p,0}+|\nabla f|^2 \varphi^{p,0}&=&\psi^{p,0}-(L_f \varphi)^{p,0}\\
 &=&\psi^{p,0}-(L_f \varphi^{'})^{p,0} \in W^1_p(D)
 \end{eqnarray*}
 Since the Neumann boundary condition for $(p,0)$-forms are the same as that for functions, so by Lemma \ref{sec5:lem-1} $\varphi^{p,0}\in W^1_p(D)$.
 Now by induction, exact regularity for $G_p$ holds.\
 
 For $n < p \leq 2n$, as no $(k,0)$-forms are involved, by Theorem \ref{sec5:thm-1} (4), the proof follows as the standard bootstrap argument.\
 
 Finally, let $\psi=0$, then Sobolev's imbedding theorem guarantees that forms in $\Hcall^*$ are all smooth up to the boundary.
 \end{proof}

 \section{Dimension computation}

 In this section, we will compute the dimension of the $L^2$ cohomology group $H^p_{((2),\bpat_f)}$ for $0\le p\le n$.

\begin{pro}\label{sec4:prop-1}
The complex $(\A^*(\bar D),\bpat_f)$ is quasi-isomorphic to the $L^2$ complex $(L^2(D),\bpat_f)$.
\end{pro}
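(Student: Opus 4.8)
The plan is to exhibit the natural inclusion $\iota\colon (\A^*(\bD),\bpat_f)\hookrightarrow (L^2(D),\bpat_f)$ as an explicit quasi-isomorphism. First I would observe that every $\varphi\in\A^*(\bD)$ lies in $\Dom(\bpat_f)$ and that $\bpat_f\varphi$ is again smooth up to $\partial D$, so $\iota$ is a genuine morphism of complexes; it then suffices to prove that the induced map $\iota_*\colon H^p_{\bpat_f}(\bD)\to H^p_{((2),\bpat_f)}(D)$ is an isomorphism for every $p$. The whole argument rests on the strong Hodge decomposition (Theorem \ref{sec3:theo-4}), the identity $I=\Delta_f G+P$, and the exact regularity of $G$ together with $\Hcall^*\subset\A^*(\bD)$ (Proposition \ref{sec5:pro-1}).

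For surjectivity I would use that every class in $H^p_{((2),\bpat_f)}(D)$ has a unique harmonic representative $h\in\Hcall^p$, by Theorem \ref{sec3:theo-4}. By Proposition \ref{sec5:pro-1} one has $h\in\A^p(\bD)$, so $h$ is smooth up to the boundary and $\bpat_f$-closed, whence $\iota_*[h]=[h]$ realizes the given $L^2$ class. Thus $\iota_*$ is onto.

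For injectivity I would exploit that $\bpat_f$ commutes with $G$, which follows from $\bpat_f\Delta_f=\Delta_f\bpat_f$ (itself a consequence of $\bpat_f^2=0$). Let $\varphi\in\A^p(\bD)$ be $\bpat_f$-closed with $\iota_*[\varphi]=0$. Then $\varphi\in\im\bpat_f$, so its harmonic projection vanishes, $P\varphi=0$, and therefore
$$
\varphi=\Delta_f G\varphi=\bpat_f\bpat_f^*G\varphi+\bpat_f^*\bpat_f G\varphi=\bpat_f\bigl(\bpat_f^*G\varphi\bigr),
$$
where the last equality uses $\bpat_f G\varphi=G\bpat_f\varphi=0$. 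Setting $\psi:=\bpat_f^*G\varphi=\vartheta_f G\varphi$, the exact regularity of $G$ gives $G\varphi\in\A^p(\bD)$; since $G\varphi$ satisfies the $\bpat_f$-Neumann boundary condition, the operator $\bpat_f^*$ acts on it as the formal adjoint $\vartheta_f$ and hence keeps it smooth up to $\partial D$, so $\psi\in\A^{p-1}(\bD)$. Thus $\varphi=\bpat_f\psi$ already in the smooth complex, i.e. $[\varphi]=0$ in $H^p_{\bpat_f}(\bD)$, proving $\iota_*$ injective.

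The main obstacle is precisely the boundary regularity furnished by Proposition \ref{sec5:pro-1}: an $L^2$-exact smooth form is \emph{a priori} only $\bpat_f$ of an $L^2$ primitive, and upgrading that primitive to one smooth up to $\partial D$ is the step that fails in the naive $\bpat$-Neumann setting. Here it is available exactly because $|\nabla f|^2$ does not vanish on $\partial D$, which is what allowed the elliptic-plus-division argument of the previous section to make $G$ exactly regular; once this is granted, the remaining bookkeeping with the Hodge decomposition is routine.
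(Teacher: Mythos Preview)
Your proof is correct and follows essentially the same route as the paper: both use the identity $I=\Delta_f G+P$ together with the exact regularity of $G$ and $\Hcall^*\subset\A^*(\bD)$ (Proposition~\ref{sec5:pro-1}) to pass from $L^2$ to smooth primitives. The paper's proof is more compressed---it just writes $\phi=P\phi+\bpat_f\bpat_f^*G\phi$ for a closed $\phi\in\A^*(\bD)$ and reads off $H^*_{\bpat_f}(\bD)\cong\Hcall^*$---whereas you separate surjectivity and injectivity of $\iota_*$ and justify the commutation $\bpat_f G=G\bpat_f$ explicitly, but the substance is the same.
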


\begin{proof}
For any $\phi\in \A^*(\bar D), \bpat_f \phi=0$, we can solve the $\bpat_f$-Neumann problem to obtain:
$$
\phi=P\phi+\bpat_f\bpat_f^*G\phi.
$$
By Proposition \ref{sec5:pro-1}, $P\phi \in \A^*(\bar D)$ and $\bpat_f^*G\phi\in \A^{*-1}(\bar D)$. This gives an isomorphism
$$
H^*(\A^*(\bar D),\bpat_f)\cong \Hcall^*\cong H^*_{((2),\bar{\partial}_f)}(D).
$$
\end{proof}

If we ignore the $L^2$ condition, we have the smooth complex $(\A^*(D),\bpat_f)$, whose cohomology can be computed by the spetral sequence as follows.

 This complex can be viewed as a total complex of the double complex $(\A^{*,*}(D),\bar{\partial},\partial f \wedge)$ with horizontal operator $\bar{\partial}$ and vertical operator $\partial f \wedge$. We consider the spectral sequence associated to the filtration
 $$
 {\Fcall}^k \A(D)= \bigoplus_{i \geq k} \A^{i,*}
 $$
 for $k \in \mathbb{Z}$.
 Since $D$ is pseudo-convex, it is stein, therefore the first page is concentrated at the first column with $(k,0)$ term given by holomorphic $k$-forms. Since the $f_i$'s have only finite common zeros, they form a regular sequence on $D$. Therefore the cohomology of the holomorphic Koszul complex
 $$0 \rightarrow \Omega^0(D) \xlongrightarrow{\partial f \wedge} \Omega^1(D) \xlongrightarrow{\partial f \wedge} \cdots \xlongrightarrow{\partial f \wedge} \Omega^n(D) \xlongrightarrow{\partial f \wedge} 0$$
 which is $E_2$, is concentrated at the top term $\Omega^n(D)/ \partial f \wedge \Omega^{n-1}(D) \cong \Jac(f)$. Thus the spectral sequence degenerate at the $E_2$-stage and the cohomology of the smooth $\bar{\partial}_f$ complex is concentrated at the middle dimention and is isomorphic to $\Jac(f)$. Hence we obtain the following result.

 \begin{pro}\label{sec4:prop-2}
 \begin{equation}
H^k_{\bpat_f}(D)=
\begin{cases}
\Jac(f)&k=n\\
0&k\neq n.
\end{cases}
\end{equation}
\end{pro}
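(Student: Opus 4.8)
The plan is to realize $(\A^*(D),\bpat_f)$ as the total complex of the double complex $(\A^{p,q}(D),\bpat,\pat f\wedge)$, in which $\bpat$ raises the antiholomorphic degree $q$ and $\pat f\wedge$ raises the holomorphic degree $p$, each raising the total degree by one, and then to run the spectral sequence of the decreasing filtration $\Fcall^k\A(D)=\bigoplus_{i\ge k}\A^{i,*}(D)$ by holomorphic degree. On the associated graded the only part of $\bpat_f$ that preserves $p$ is $\bpat$, so the zeroth page is $E_0^{p,q}=\A^{p,q}(D)$ with differential $d_0=\bpat$.

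For the first page I would use that $D$, being pseudoconvex, is Stein. By Dolbeault's isomorphism together with Cartan's Theorem B one has $H^q_{\bpat}(\A^{p,*}(D))\cong H^q(D,\Omega^p)$, which vanishes for $q>0$ and equals the space $\Omega^p(D)$ of global holomorphic $p$-forms for $q=0$. Hence $E_1$ is concentrated in the single row $q=0$, where the induced differential $d_1$ is precisely $\pat f\wedge$, so that $E_1^{*,0}$ is the holomorphic Koszul complex
\[
0\to\Omega^0(D)\xlongrightarrow{\pat f\wedge}\Omega^1(D)\xlongrightarrow{\pat f\wedge}\cdots\xlongrightarrow{\pat f\wedge}\Omega^n(D)\to0,
\]
namely the Koszul complex of the tuple $(f_1,\dots,f_n)$ over the ring $\Ocal(D)$ of holomorphic functions, with $dz_1,\dots,dz_n$ as generators.

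The crux is to show this Koszul complex is acyclic except at the top. I would argue sheaf-theoretically: at each point $x\in D$ the local ring $\Ocal_{D,x}$ is regular of dimension $n$, hence Cohen--Macaulay; the hypothesis that $f$ has only isolated critical points means the common zero locus $\{f_1=\cdots=f_n=0\}$ is finite, so near each of its points $(f_1,\dots,f_n)$ is a system of parameters and therefore a regular sequence, while away from the locus some $f_i$ is a unit. Consequently the Koszul complex of sheaves is a resolution of $\Ocal/(f_1,\dots,f_n)$, exact in degrees $<n$. Applying $H^0(D,-)$ and invoking Theorem B once more (so that global sections preserve this exactness) yields $E_2^{p,0}=0$ for $p<n$ and $E_2^{n,0}=\Omega^n(D)/\pat f\wedge\Omega^{n-1}(D)\cong\Ocal(D)/(f_1,\dots,f_n)=\Jac(f)$.

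Finally, since $E_2$ is supported at the single bidegree $(n,0)$, every higher differential has either source or target zero, so the sequence degenerates at $E_2=E_\infty$; reading off the total cohomology gives $H^k_{\bpat_f}(D)=\Jac(f)$ for $k=n$ and $0$ otherwise. The main obstacle is the regular-sequence claim itself: because $\Ocal(D)$ is far from Noetherian, one cannot argue with depth in the global ring directly, and the clean route is the local Cohen--Macaulay/system-of-parameters argument at the sheaf level, combined with the Stein vanishing that allows global sections to commute with passage to Koszul homology.
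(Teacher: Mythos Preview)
Your proof is correct and follows essentially the same route as the paper: the same double complex, the same filtration by holomorphic degree, Stein/Dolbeault vanishing to reduce $E_1$ to the holomorphic Koszul complex, and degeneration at $E_2$ concentrated at $(n,0)$. The only difference is that the paper simply asserts that the $f_i$ form a regular sequence because their common zero set is finite, whereas you supply the justification via the local Cohen--Macaulay argument at the sheaf level together with Theorem B to pass to global sections; this is a genuine improvement in rigor, since, as you note, $\Ocal(D)$ is not Noetherian and the global depth argument is not available directly.
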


To construct the cohomology of the complex $(\A^*_c(D),\bpat_f)$ consisting of the forms with compact support, we want to use    a homotopy introduced in \cite{LLS}.

 Let $\rho$ be a smooth function with compact support in $D$ such that it equals to $1$ in a neighborhood of $Crit(f)$. Define the following operator
 $$V_f= \sum^n_{i=1} \frac{\bar{f_i}}{|\nabla f|^2} (dz_i \wedge)^* : \quad \A^{*,*}(D \setminus Crit(f)) \rightarrow \A^{*-1,*}(D \setminus Crit(f))$$

A direct calculation gives the following result.
\begin{lem}\label{sec4:lemm-1}
\begin{equation}
 [df \wedge, V_f]=1
 \end{equation}
 and
 \begin{equation}
 [\bar{\partial},[\bar{\partial},V_f]]=[df \wedge,[\bar{\partial},V_f]]=[V_f,[\bar{\partial},V_f]]=0
 \end{equation}
 \end{lem}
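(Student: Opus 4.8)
The plan is to treat the whole statement as a computation in the graded Clifford-type algebra generated, at each point of $D\setminus\crit(f)$, by the exterior operators $dz_i\wedge$, their adjoints $(dz_i\wedge)^*$, and the first-order operator $\bpat$. Since $f$ is holomorphic we have $df=\pat f=\sum_i f_i\,dz_i$, so $df\wedge$ and $V_f$ are both odd operators and every bracket in the statement is the (anti)commutator dictated by the super-bracket convention $[A,B]=AB-(-1)^{\deg A\deg B}BA$. The only facts I will feed in beyond formal bookkeeping are the pointwise (anti)commutation relations
\begin{equation*}
\{dz_i\wedge,(dz_j\wedge)^*\}=\delta_{ij},\qquad \{dz_i\wedge,dz_j\wedge\}=\{(dz_i\wedge)^*,(dz_j\wedge)^*\}=0,
\end{equation*}
together with the observations that multiplication by a function commutes with every $dz_i\wedge$ and $(dz_i\wedge)^*$, and that the $(1,0)$-type contraction $(dz_i\wedge)^*$ anticommutes with $\bpat$, i.e. $\{\bpat,(dz_i\wedge)^*\}=0$ (because $\bpat$ only introduces $d\bar z$-factors, against which a $(1,0)$-contraction pairs to zero, while its coefficient differentiations commute with the algebraic contraction). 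The normalizations of the metric $h$ are fixed so that the first relation holds as written and is consistent with $|\nabla f|^2=\sum_i|f_i|^2$.

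For the first identity I would substitute the definitions, pull out the function coefficients, and collapse the Clifford relation:
\begin{equation*}
[df\wedge,V_f]=\Big\{\textstyle\sum_i f_i\,dz_i\wedge,\ \sum_j \tfrac{\bar f_j}{|\nabla f|^2}(dz_j\wedge)^*\Big\}=\sum_{i,j}\frac{f_i\bar f_j}{|\nabla f|^2}\,\delta_{ij}=\frac{\sum_i|f_i|^2}{|\nabla f|^2}=1 .
\end{equation*}
This holds on $D\setminus\crit(f)$, where $|\nabla f|^2\neq 0$, which is exactly the locus on which $V_f$ is defined; thus $V_f$ is a pointwise homotopy inverting $df\wedge$.

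Next I would compute the even operator $W:=[\bpat,V_f]$ once and for all. Writing $g_i:=\bar f_i/|\nabla f|^2$ and applying the Leibniz rule $\bpat\circ(g_i\,\cdot)=(\bpat g_i)\wedge+g_i\,\bpat$, the term $g_i\{\bpat,(dz_i\wedge)^*\}$ drops out by the anticommutation above, leaving the zeroth-order operator
\begin{equation*}
W=[\bpat,V_f]=\sum_i(\bpat g_i)\wedge(dz_i\wedge)^* .
\end{equation*}
The identity $[\bpat,[\bpat,V_f]]=[\bpat,W]=0$ then needs no further computation: by the graded Jacobi identity $2[\bpat,W]=[[\bpat,\bpat],V_f]=[2\bpat^2,V_f]=0$. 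For $[V_f,W]=0$ I would move the single contraction of $V_f$ past the wedge-contraction pair in $W$, using that $(dz_j\wedge)^*$ anticommutes both with $(dz_i\wedge)^*$ and with the $(0,1)$-wedge $(\bpat g_i)\wedge$; the two resulting triple products coincide after reordering, so the commutator vanishes.

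The one genuinely computational point, and the \emph{crux} of the lemma, is $[df\wedge,W]=0$. Commuting $df\wedge$ through $W$ with the Clifford relation leaves a single leftover term, $[df\wedge,W]=-\sum_i f_i\,(\bpat g_i)\wedge$, so everything reduces to the identity $\sum_i f_i\,\bpat g_i=0$ as a $(0,1)$-form. Here I would expand $\bpat g_i=\bpat(\bar f_i/|\nabla f|^2)$ and use holomorphicity of $f$: with $|\nabla f|^2=\sum_k f_k\bar f_k$ and $\bpat f_k=0$ one has $\bpat|\nabla f|^2=\sum_k f_k\,\bpat\bar f_k$, so that
\begin{align*}
\sum_i f_i\,\bpat g_i &=\frac{1}{|\nabla f|^4}\Big(|\nabla f|^2\textstyle\sum_i f_i\,\bpat\bar f_i-\big(\sum_i f_i\bar f_i\big)\,\bpat|\nabla f|^2\Big)\\
&=\frac{|\nabla f|^2\,\bpat|\nabla f|^2-|\nabla f|^2\,\bpat|\nabla f|^2}{|\nabla f|^4}=0 .
\end{align*}
This telescoping is exactly where the specific shape of the coefficients $\bar f_i/|\nabla f|^2$ is indispensable: $V_f$ is engineered precisely so that $df\wedge$ commutes with $[\bpat,V_f]$, and this is the only step drawing on more than the formal super-algebra. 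Everything else is routine once $W$ has been put in its normal form.
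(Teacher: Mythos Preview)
Your proof is correct and supplies exactly the ``direct calculation'' the paper invokes without writing out; the computation of $W=[\bpat,V_f]$ matches the formula the paper records a few lines later, and your verification of each vanishing bracket is sound. One small simplification: the identity you single out as the crux, $[df\wedge,W]=0$, also follows formally from the graded Jacobi identity together with $[df\wedge,\bpat]=0$ (since $\bpat(\pat f)=0$) and $[\bpat,1]=0$, so the telescoping computation of $\sum_i f_i\,\bpat g_i$ is not strictly necessary.
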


Define two operators on $D$:
\begin{align}
T_{\rho}= \rho + (\bar{\partial} \rho)V_f \frac{1}{1+[\bar{\partial},V_f]},\;
R_{\rho}= (1-\rho)V_f \frac{1}{1+[\bar{\partial},V_f]}
\end{align}

\begin{lem}\label{sec4:lemm-2}
\begin{equation}
[\bar{\partial}_f,R_{\rho}]=1-T_{\rho} \quad on \quad \A^*(D)
\end{equation}
\end{lem}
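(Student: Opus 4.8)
The plan is to verify the identity by a direct computation with the graded super-bracket, feeding in the three commutation relations of Lemma \ref{sec4:lemm-1}. First I would record two preliminary observations. Since $f$ is holomorphic, $df\wedge = \pat f\wedge$, so $\bpat_f = \bpat + df\wedge$. Next, writing $A := [\bpat, V_f]$, the operator $A$ has degree $0$ and sends $(p,q)$-forms to $(p-1,q+1)$-forms; hence $A^{n+1}=0$, so $\frac{1}{1+A} = \sum_{k=0}^{n}(-1)^k A^k$ is a genuine (finite) degree-$0$ operator on $\A^{*,*}(D\setminus \crit(f))$. By Lemma \ref{sec4:lemm-1}, $A$ super-commutes with $\bpat$, with $df\wedge$, and with $V_f$; consequently the polynomial $\frac{1}{1+A}$ in $A$ commutes with all three as well.

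The heart of the computation is the single relation $\bigl[\bpat_f,\, V_f \tfrac{1}{1+A}\bigr] = 1$. To get it I would apply the graded Leibniz rule separately to $\bpat$ and to $df\wedge$. Because $\frac{1}{1+A}$ commutes with both and $[\bpat, \frac{1}{1+A}] = [df\wedge, \frac{1}{1+A}] = 0$, the inner factor is inert, and one finds $[\bpat, V_f\frac{1}{1+A}] = [\bpat,V_f]\frac{1}{1+A} = A\frac{1}{1+A}$ together with $[df\wedge, V_f\frac{1}{1+A}] = [df\wedge,V_f]\frac{1}{1+A} = \frac{1}{1+A}$, using $[df\wedge,V_f]=1$. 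Adding these gives $(A+1)\frac{1}{1+A} = 1$, as wanted.

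It remains to account for the cut-off. Since $(1-\rho)$ is a scalar function, the $df\wedge$ part of $\bpat_f$ commutes with multiplication by it, so $[\bpat_f, 1-\rho] = [\bpat,1-\rho] = -(\bpat\rho)\wedge$. Applying the graded Leibniz rule to the product $R_\rho = (1-\rho)\cdot\bigl(V_f\frac{1}{1+A}\bigr)$ and substituting the relation above,
\begin{align*}
[\bpat_f, R_\rho] &= [\bpat_f, 1-\rho]\,V_f\tfrac{1}{1+A} + (1-\rho)\,\Bigl[\bpat_f, V_f\tfrac{1}{1+A}\Bigr]\\
&= -(\bpat\rho)\,V_f\tfrac{1}{1+A} + (1-\rho)\\
&= 1 - \Bigl(\rho + (\bpat\rho)V_f\tfrac{1}{1+A}\Bigr) = 1 - T_\rho,
\end{align*}
which is the claim.

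The one genuine subtlety — and the step I expect to require the most care — is that $V_f$ and $\frac{1}{1+A}$ live only on $D\setminus\crit(f)$, whereas the identity is asserted on all of $\A^*(D)$. This is exactly why the cut-off is built into $R_\rho$ and $T_\rho$: the coefficients $(1-\rho)$ and $\bpat\rho$ vanish on the neighborhood of $\crit(f)$ where $\rho\equiv 1$, so both $(1-\rho)V_f\frac{1}{1+A}$ and $(\bpat\rho)V_f\frac{1}{1+A}$ extend by zero to smooth operators on $D$. One must check that the intermediate products in the Leibniz expansion are likewise supported away from $\crit(f)$, so that every manipulation takes place honestly in $\A^*(D\setminus\crit(f))$ and the resulting identity descends to $\A^*(D)$. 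The remaining sign bookkeeping in the super-bracket is routine.
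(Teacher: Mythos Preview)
Your proof is correct and follows essentially the same route as the paper's: both apply the graded Leibniz rule to $R_\rho=(1-\rho)\,V_f\frac{1}{1+[\bpat,V_f]}$, use $[\bpat_f,1-\rho]=-\bpat\rho$ and $[\bpat_f,V_f]=1+[\bpat,V_f]$ (equivalently your identity $[\bpat_f,V_f\frac{1}{1+A}]=1$), and collect terms into $1-T_\rho$. Your added remarks on the nilpotency of $A$ and on the cut-off ensuring well-definedness over all of $D$ are points the paper leaves implicit, but the argument is the same.
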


\begin{proof}
 By Lemma \ref{sec4:lemm-1},
 \begin{eqnarray*}
 [\bar{\partial}_f,R_{\rho}]&=& [\bar{\partial}_f,1-\rho]V_f \frac{1}{1+[\bar{\partial},V_f]}
 +(1-\rho)[\bar{\partial}_f,V_f] \frac{1}{1+[\bar{\partial},V_f]}\\
 &=& (-\bar{\partial} \rho)V_f \frac{1}{1+[\bar{\partial},V_f]}+ (1-\rho)(1+[\bar{\partial},V_f]) \frac{1}{1+[\bar{\partial},V_f]}\\
 &=& 1-T_{\rho} \quad on \quad \A^*(D)
 \end{eqnarray*}
 \end{proof}

The Lemma \ref{sec4:lemm-2} built a homotopy from $(\A(D),\bar{\partial}_f)$ to $(\A_c(D),\bar{\partial}_f)$, hence we have

 \begin{pro}\label{sec4:prop-3}
\begin{equation}
H^*_{\bpat_f}(D)\cong H^*_{(c,\bpat_f)}(D).
\end{equation}
 \end{pro}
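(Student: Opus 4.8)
The plan is to read Proposition~\ref{sec4:prop-3} off the chain homotopy of Lemma~\ref{sec4:lemm-2}, exhibiting the inclusion $\iota\colon(\A^*_c(D),\bpat_f)\hookrightarrow(\A^*(D),\bpat_f)$ and the operator $T_\rho$ as mutually inverse quasi-isomorphisms. First I would record that $[\bpat_f,R_\rho]$ is the anticommutator $\bpat_f R_\rho+R_\rho\bpat_f$: since $R_\rho$ lowers total form degree by one, the product $\deg(\bpat_f)\deg(R_\rho)$ is odd, so Lemma~\ref{sec4:lemm-2} reads $\bpat_f R_\rho+R_\rho\bpat_f=1-T_\rho$ on $\A^*(D)$. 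Using $\bpat_f^2=0$ and composing this identity with $\bpat_f$ on either side gives $\bpat_f(1-T_\rho)=\bpat_f R_\rho\bpat_f=(1-T_\rho)\bpat_f$, hence $\bpat_f T_\rho=T_\rho\bpat_f$, so $T_\rho$ is a chain map; the identity itself exhibits $1-T_\rho$ as null-homotopic, so $T_\rho$ induces the identity on $H^*_{\bpat_f}(D)$.

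Next I would verify the support statements that make $T_\rho$ land in the compactly supported subcomplex and make $R_\rho$ preserve it. The summand $\rho$ of $T_\rho$ is compactly supported in $D$ by construction, and $\bpat\rho$ is compactly supported and vanishes on a neighborhood of $\crit(f)$ (there $\rho\equiv1$); hence the second summand $(\bpat\rho)V_f\frac{1}{1+[\bpat,V_f]}$ is the product of a compactly supported factor vanishing near $\crit(f)$ with an operator smooth on $D\setminus\crit(f)$, so $T_\rho$ maps $\A^*(D)$ into $\A^*_c(D)$. For $R_\rho$ the crucial point is that $\frac{1}{1+[\bpat,V_f]}$ is a \emph{local} operator: a direct computation shows the first-order parts of $[\bpat,V_f]$ cancel, leaving an algebraic (zeroth-order) operator of Hodge bidegree $(-1,+1)$, which is therefore nilpotent on the finite-dimensional exterior algebra, so $\frac{1}{1+[\bpat,V_f]}=\sum_{k\ge0}(-1)^k[\bpat,V_f]^k$ is a finite sum of pointwise operators. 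Thus $R_\rho$ neither enlarges supports nor integrates, and the factor $1-\rho$ forces it to vanish near $\crit(f)$; consequently $R_\rho$ carries $\A^*_c(D)$ into itself.

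Finally I would assemble the two homotopies. On $\A^*(D)$ the relation gives $\iota T_\rho=T_\rho\sim1$, so $\iota_*(T_\rho)_*=\id$ on $H^*_{\bpat_f}(D)$. Restricting the \emph{same} identity to $\A^*_c(D)$ — legitimate precisely because $R_\rho$ preserves $\A^*_c(D)$ — shows $T_\rho\iota\sim1$ inside the compactly supported complex, so $(T_\rho)_*\iota_*=\id$ on $H^*_{(c,\bpat_f)}(D)$. Hence $\iota_*$ and $(T_\rho)_*$ are inverse isomorphisms and $H^*_{\bpat_f}(D)\cong H^*_{(c,\bpat_f)}(D)$.

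The hard part will be the support bookkeeping of the second paragraph, specifically justifying that $R_\rho$ preserves compact support. This hinges on recognizing $\frac{1}{1+[\bpat,V_f]}$ as a finite sum of algebraic operators via the nilpotency of $[\bpat,V_f]$, so that $R_\rho$ is genuinely local rather than some nonlocal inverse, together with the observation that the cutoffs $\rho$ and $\bpat\rho$ confine every operation to $D\setminus\crit(f)$, where $V_f$ is smooth.
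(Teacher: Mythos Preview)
Your proposal is correct and follows essentially the same approach as the paper: both use the homotopy identity of Lemma~\ref{sec4:lemm-2}, the fact that $T_\rho$ lands in $\A^*_c(D)$, and the locality of $R_\rho$ (via the zeroth-order, nilpotent nature of $[\bpat,V_f]$) to conclude. The only cosmetic difference is packaging: the paper verifies injectivity and surjectivity of $i_*$ by hand, whereas you phrase it as $\iota$ and $T_\rho$ being mutually inverse up to the chain homotopy $R_\rho$; your observation that $R_\rho$ preserves $\A^*_c(D)$ is exactly what the paper uses implicitly when it writes $T_\rho c + R_\rho b \in \A_c(D)$.
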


\begin{proof}Let $i:\A_c^k(D)\to \A^k(D)$ be the inclusion. Since $\bpat_f(\A_c(D))\subset \bpat_f(\A(D))$, we have the well-defined homomorphism $i_*:H^k_{(c,\bpat_f)}(D)\to H^k_{\bpat_f}(D)$. Assume that $[i(b)]=0\in H^k_{\bpat_f}$, then there exists a $c\in \A^{k-1}(D)$ such that $b=\bpat_f c$. By Lemma \ref{sec4:lemm-2}, we have
$$
b=\bpat_f(T_\rho c+R_\rho\bpat_f c)=\bpat_f(T_\rho+R_\rho b),
$$
where $T_\rho+R_\rho b\in \A_c(D)$. This shows that $[b]$ is the zero class in $H^k_{(c,\bpat_f)}(D)$. Hence $i_*$ is injective. On the other hand, if $\bpat_f b=0$ for $b\in \A^k(D)$, then $b=T_\rho b+\bpat_f R_\rho b$, which shows that $i_*$ is also surjective.
\end{proof}

 Let us check $R_{\rho}$ more carefully.
 In a small neighborhood of $Crit(f)$, $R_{\rho}=0$.
 Outside such a neighborhood,
 $$R_{\rho} = (1-\rho)V_f \sum^n_{k=1}(-1)^k[\bar{\partial},V_f]^k$$
 Here $V_f$ is of order $0$ and
 \begin{eqnarray*}
 [\bar{\partial},V_f]&=& \sum_{i,j} [\frac{\partial}{\partial \bar{z}_i} d\bar{z}_i \wedge,\frac{\bar{f_j}}{|\nabla f|^2} (dz_j \wedge)^*]\\
 &=& \sum_{i,j}\frac{\partial}{\partial \bar{z}_i}(\frac{\bar{f_j}}{|\nabla f|^2}) d\bar{z}_i \wedge (dz_j \wedge)^*
 \end{eqnarray*}
 is also of order 0. So $R_{\rho}$ is actually smooth and bounded, and it defines a bounded operator from $\A(\bar D)$ to itself. Now using the homotopy in Lemma \ref{sec4:lemm-2}, we can also have the following result.
 \begin{pro}\label{sec4:prop-4}
 \begin{equation}
H^* (\A(\bar D),\bar{\partial}_f) \cong H^*_{\bpat_f}(D).
\end{equation}
  \end{pro}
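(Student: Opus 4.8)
The plan is to realize the isomorphism by the inclusion chain map $\iota\colon \A^*(\bar D)\hookrightarrow \A^*(D)$ and to show that $\iota$ induces an isomorphism on $\bpat_f$-cohomology, running the homotopy of Lemma \ref{sec4:lemm-2} in exactly the same way as in the proof of Proposition \ref{sec4:prop-3}. Two structural facts should be recorded first. First, $T_\rho$ is a chain map, $\bpat_f T_\rho=T_\rho\bpat_f$: this follows by applying $\bpat_f$ to both sides of the identity $[\bpat_f,R_\rho]=1-T_\rho$ and using $\bpat_f^2=0$, which yields $\bpat_f(1-T_\rho)=(1-T_\rho)\bpat_f$. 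Second, $T_\rho=\rho+(\bpat\rho)V_f\tfrac{1}{1+[\bpat,V_f]}$ carries the compactly supported factors $\rho$ and $\bpat\rho$, so its image lies in $\A_c(D)$, and by construction $\A_c(D)\subset\A^*(\bar D)$.

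Next I would invoke the observation made just before the statement: since $R_\rho=(1-\rho)V_f\tfrac{1}{1+[\bpat,V_f]}$ is pointwise multiplication by smooth coefficients composed with the order-zero operators $V_f$ and $[\bpat,V_f]$, it is a smooth, bounded, \emph{support-preserving} operator. In particular it maps $\A^*(D)$ to $\A^*(D)$ and $\A^*(\bar D)$ to $\A^*(\bar D)$.

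With these in hand the two directions are formal. For surjectivity of $\iota_*$, take a $\bpat_f$-closed $b\in\A^k(D)$; the homotopy gives $b-T_\rho b=\bpat_f R_\rho b$ with $R_\rho b\in\A^k(D)$, so $[b]=[T_\rho b]$ in $H^k_{\bpat_f}(D)$, while $T_\rho b\in\A_c(D)\subset\A^k(\bar D)$ is $\bpat_f$-closed by the chain-map property, whence $[b]=\iota_*[T_\rho b]$. For injectivity, take a $\bpat_f$-closed $a\in\A^k(\bar D)$ with $\iota(a)=\bpat_f c$ for some $c\in\A^{k-1}(D)$; applying the homotopy to $a$ gives $a=T_\rho a+\bpat_f R_\rho a$ with $R_\rho a\in\A^k(\bar D)$, so $[a]=[T_\rho a]$ in $H^k(\A^*(\bar D),\bpat_f)$, and the chain-map property together with $a=\bpat_f c$ gives $T_\rho a=T_\rho\bpat_f c=\bpat_f(T_\rho c)$ with $T_\rho c\in\A_c(D)\subset\A^{k-1}(\bar D)$, so $[T_\rho a]=0$ and therefore $[a]=0$.

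The only genuinely analytic point — and the main obstacle — is that $R_\rho$ actually preserves smoothness up to the boundary, i.e. really maps $\A^*(\bar D)$ into itself. This is precisely where the hypothesis that $f$ has no critical points on $\partial D$ is essential: it forces $|\nabla f|^2$ to be bounded away from zero in a neighborhood of $\partial D$, so the coefficients $\tfrac{\bar f_j}{|\nabla f|^2}$ and $\tfrac{\partial}{\partial\bar z_i}\bigl(\tfrac{\bar f_j}{|\nabla f|^2}\bigr)$ appearing in $V_f$ and $[\bpat,V_f]$ extend smoothly to $\bar D$, making $R_\rho$ a bona fide bounded operator on $\A^*(\bar D)$. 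Once this regularity is secured, the remaining argument is the same homotopy algebra used for Proposition \ref{sec4:prop-3}.
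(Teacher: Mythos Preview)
Your proposal is correct and follows essentially the same approach as the paper: the paper's proof amounts precisely to the observation (made in the paragraph preceding the proposition) that $R_\rho$ is an order-zero operator with coefficients smooth up to $\bar D$, hence preserves $\A(\bar D)$, after which the homotopy argument of Lemma~\ref{sec4:lemm-2} is rerun exactly as in Proposition~\ref{sec4:prop-3}. Your injectivity step applies the homotopy to $a$ rather than to the primitive $c$ (as the paper does for Proposition~\ref{sec4:prop-3}), but this is an immaterial variant; note the small typo that $R_\rho a$ lies in $\A^{k-1}(\bar D)$, not $\A^k(\bar D)$.
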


 Combining the results of Proposition \ref{sec4:prop-1}, \ref{sec4:prop-2}, \ref{sec4:prop-3} and \ref{sec4:prop-4}, we obtain Theorem \ref{sec1:theo-2}.

\begin{rem}
When $D$ is only pseudoconvex with smooth boundary, interior regularity of $\Delta_f$ shows $\Hcall^* \subset \A^*(D)$ and $G$ preserves $\A^*(D)$. Thus similar argument like Proposition \ref{sec4:prop-1} can be applied to show $H^*(L^2(D)\cap \A^*(D)) \cong \Hcall^*$. Moreover, like Proposition \ref{sec4:prop-4}, $T_{\rho}$ and $R_{\rho}$ can be used to give an isomorphism between $H^k(L^2(D)\cap \A^*(D))$ and $H^k_{(c,\bar\partial_f)}$. So Theorem \ref{sec1:theo-2} still holds.
\end{rem}

Likewise, $H^k_{(0,\bar\partial_f)} \cong H^k_{(c,\bar\partial_f)}$ by using $T_{\rho}$ and $R_{\rho}$. Thus by Proposition \ref{sec3:prop-2}, $H^n_{\bpat_{-f}}(\Ccal)$ and $H^n_{\bpat_f}(\bD)$ are isomorphic when $D$ is strongly pseudoconvex.
We can also obtain this result by proving the following duality theorem.
\begin{thm}We have the isomorphism
\begin{equation}
H^k_{\bpat_{-f}}(\Ccal)\cong (H^{2n-k}_{\bpat_f}(\bD))^*
\end{equation}
\end{thm}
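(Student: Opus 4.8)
The plan is to exhibit the stated isomorphism as the adjoint of a bilinear pairing
\[
H^k_{\bpat_{-f}}(\Ccal)\times H^{2n-k}_{\bpat_f}(\bD)\longrightarrow\CC,\qquad ([\alpha],[\beta])\longmapsto\int_{D}\alpha\wedge\beta,
\]
a Serre/Poincar\'e--Lefschetz type duality between the two twisted complexes (note $\alpha\wedge\beta$ is a form of top degree $2n$). First I would record that both sides vanish off degree $n$ and agree there. Chaining Proposition \ref{sec3:prop-2}, the isomorphism $H^\bullet_{(0,\bpat_{-f})}\cong H^\bullet_{(c,\bpat_{-f})}$ (via $T_\rho,R_\rho$ applied to $-f$), Proposition \ref{sec4:prop-3} and Proposition \ref{sec4:prop-2} for $-f$ gives $H^k_{\bpat_{-f}}(\Ccal)\cong\Jac(-f)=\Jac(f)$ for $k=n$ and $0$ otherwise, while Propositions \ref{sec4:prop-2} and \ref{sec4:prop-4} give $H^{2n-k}_{\bpat_f}(\bD)\cong\Jac(f)$ for $k=n$ and $0$ otherwise. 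Hence the two spaces have equal finite dimension, and it remains to construct the pairing, show it descends to cohomology, and show it is non-degenerate in degree $n$.

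The engine for well-definedness is the twisted Leibniz identity $\bpat_{-f}\mu\wedge\beta+(-1)^{\deg\mu}\mu\wedge\bpat_f\beta=\bpat(\mu\wedge\beta)$, in which the two copies of $\pat f\wedge$ cancel precisely because one complex is twisted by $-f$ and the other by $+f$; this is exactly the reason the sign is flipped in the statement. For a $(2n-1)$-form $\gamma$ one has $\int_D\bpat\gamma=\int_{\pat D}j^\ast(\gamma^{n,n-1})$ by Stokes applied to the bidegree component $\gamma^{n,n-1}$, where $j\colon\pat D\hookrightarrow\bD$ is the inclusion. Replacing $\alpha$ by $\alpha+\bpat_{-f}\alpha'$ with $\alpha'\in\Ccal^{k-1}$ (using $\bpat_{-f}\Ccal^\bullet\subset\Ccal^{\bullet+1}$, the lemma preceding Proposition \ref{sec3:prop-2} applied to $-f$), or $\beta$ by $\beta+\bpat_f\beta'$, then changes the integral only by a boundary term of the form $\pm\int_{\pat D}j^\ast\big((\alpha'\wedge\beta)^{n,n-1}\big)$, in which the factor carrying the $\Ccal$-condition is $\alpha'$ (resp.\ $\alpha$). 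The key local fact is that an element of $\Ccal$ has the form $\bpat r\wedge\lambda+r\mu$, so its boundary restriction lies in the ideal generated by $j^\ast\bpat r$; since $j^\ast dr=0$ forces $j^\ast\bpat r=-j^\ast\pat r$, the relevant component equals $-j^\ast\big(\pat r\wedge(\lambda\wedge\beta)^{n,n-2}\big)$, which vanishes identically because $\pat r\wedge(\,\cdot\,)^{n,n-2}$ has holomorphic degree $n+1$. Thus all boundary terms vanish and the pairing descends to cohomology.

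To prove non-degeneracy it suffices, by the equality of dimensions, to show that the induced map $\Phi\colon H^n_{\bpat_{-f}}(\Ccal)\to (H^n_{\bpat_f}(\bD))^\ast$ is injective. Here I would use the conjugate-linear Hodge star $\bar\star$, normalized so that $\langle\varphi,\psi\rangle=\int_D\varphi\wedge\bar\star\psi$ and $\bar\star$ sends $(p,q)$-forms to $(n-p,n-q)$-forms. Represent a nonzero class by a form $\alpha$ harmonic for the $\Ccal$-side ($\vartheta_{-f}$-Neumann) Laplacian, so that $\bpat_{-f}\alpha=0$ and $\vartheta_{-f}\alpha=0$, and set $\beta:=\bar\star\alpha\in\A^n(\bD)$. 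Because $\bar\star$ intertwines the two twisted complexes, conjugating $\vartheta_{-f}$ into $\pm\bpat_f$ and carrying the boundary condition defining $\Ccal$ to the free condition on $\bD$, the form $\beta$ is $\bpat_f$-closed and smooth up to $\pat D$, hence represents a class in $H^n_{\bpat_f}(\bD)$; moreover $\langle[\alpha],[\beta]\rangle=\int_D\alpha\wedge\bar\star\alpha=\|\alpha\|^2>0$. Therefore $\Phi([\alpha])\neq0$, so $\Phi$ is injective and, by the dimension count of the first paragraph, an isomorphism, which is the assertion.

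The main obstacle is the analytic input to the previous paragraph: one must develop, for the $\vartheta_{-f}$-Neumann problem on $(\Ccal^\bullet,\bpat_{-f})$, the analogues of the self-adjointness, strong Hodge decomposition and global regularity of Sections 2--5 (so that harmonic representatives exist and lie in $\A^\bullet(\bD)$), and then verify that $\bar\star$ genuinely exchanges the two boundary conditions, i.e.\ that $\bar\star$ carries $\Dom(\vartheta_{-f}^\ast)\cap\A(\bD)$ to the free space and conjugates $\vartheta_{-f}=\vartheta-\bar f_j\iota_{\pat_j}$ into $\bpat_f$ up to sign, taking due account of the appearance of $\bar f$ and the holomorphy of $f$. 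A technically lighter alternative, which sidesteps the $\Ccal$-side Hodge theory, is to transport both classes to $\Jac(f)$ through the isomorphisms of the first paragraph and to identify the pairing, up to a nonzero constant, with the Grothendieck residue pairing on $\Jac(f)$, whose non-degeneracy is classical; this trades the Hodge-star bookkeeping for a local residue computation.
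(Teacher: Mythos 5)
Your overall route is the same as the paper's: the paper defines the pairing $\int_D\varphi\wedge\psi$, delegates well-definedness to the argument of Proposition 5.1.5 in \cite{FK} (with the sign flip coming from the $\pat f\wedge$ term, which is exactly why one complex must be twisted by $-f$), and obtains non-degeneracy from the strong Hodge decomposition (Theorem \ref{sec3:theo-4}) playing the role of condition $Z(q)$. Your Stokes computation is a correct unpacking of that citation: the reduction to the $(n,n-1)$ component, the use of $j^*\bpat r=-j^*\pat r$, and the observation that $\pat r\wedge(\cdot)^{n,n-2}$ dies for type reasons is precisely the Folland--Kohn mechanism, and you correctly identify that in both substitutions the factor carrying the $\Ccal$-condition is the one that kills the boundary term. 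Your opening dimension count via the chain $H^k_{\bpat_{-f}}(\Ccal)\cong H^k_{(0,\bpat_{-f})}\cong H^k_{(c,\bpat_{-f})}\cong H^k_{\bpat_{-f}}(D)\cong\Jac(f)$ is also consistent with what the paper establishes just before stating the theorem.

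The one genuine flaw is the orientation of your non-degeneracy argument. You represent a class of $H^n_{\bpat_{-f}}(\Ccal)$ by a harmonic form for a $\vartheta_{-f}$-Neumann problem on the $\Ccal$-side, and you correctly flag that this would require redeveloping self-adjointness, the strong Hodge decomposition and global regularity for that second boundary value problem --- none of which the paper provides, and which you leave unresolved (your residue-pairing alternative is likewise only sketched). This extra analytic work is unnecessary: run the argument in the opposite direction. By Proposition \ref{sec4:prop-1} and Proposition \ref{sec5:pro-1}, every class in $H^{2n-k}_{\bpat_f}(\bD)$ has a representative $\varphi\in\Hcall^{2n-k}\subset\A^*(\bar D)$ harmonic for the \emph{already established} $\bpat_f$-Neumann problem; then $\bar\star\varphi$ is $\bpat_{-f}$-closed, lies in $\Ccal^{k}$ (the conjugate-linear star exchanges the free and $\sigma(\bpat,dr)$ boundary conditions and conjugates $\vartheta_f$ into $\pm\bpat_{-f}$), and pairs with $\varphi$ to give $\|\varphi\|^2>0$. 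This proves injectivity of $H^{2n-k}_{\bpat_f}(\bD)\to(H^{k}_{\bpat_{-f}}(\Ccal))^*$, and since you have already shown the two spaces have the same finite dimension, the transposed map $H^k_{\bpat_{-f}}(\Ccal)\to(H^{2n-k}_{\bpat_f}(\bD))^*$ is then also an isomorphism. This is exactly what the paper means by letting Theorem \ref{sec3:theo-4} stand in for $Z(q)$; with that reorientation your proof closes without any new Hodge theory.
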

\begin{proof}
The idea is to construct a pairing
$$\varphi,\psi \mapsto \int_D \varphi \wedge \psi$$
for $\varphi \in H^{2n-k}_{\bpat_f}(\bD)$ and $\psi \in H^k_{\bpat_{-f}}(\Ccal)$.
Almost the same proof as the Proposition 5.1.5 in \cite{FK}, except the arise of $\bar \partial f \wedge$, which gives a minus sign here, shows that this is indeed a pairing. To show it is non-degenerate, we only need Theorem \ref{sec3:theo-4} to take the role of the condition Z(q) in Proposition 5.1.5 in \cite{FK}.
\end{proof}

 \end{document}